\documentclass[11pt, a4paper]{amsart}
\setlength{\textwidth}{15.3cm}
\setlength{\textheight}{21.6cm}
\hoffset=-35pt
\setcounter{tocdepth}{1}
\usepackage{amsfonts,amssymb,amsmath, amsthm}
\usepackage{mathrsfs, mathtools}
\usepackage{lmodern}
\usepackage{qsymbols}
\usepackage{latexsym}
\usepackage[noadjust]{cite}
\usepackage{pdfsync}
\usepackage{bm}
\usepackage{enumitem}

\newtheorem{thm}{Theorem}[section]
\newtheorem{lem}[thm]{Lemma}

\theoremstyle{definition}
\newtheorem{defn}[thm]{Definition}
\newtheorem{rem}[thm]{Remark}

\numberwithin{equation}{section}
\usepackage[plainpages=false,pdfpagelabels,backref=page,citecolor=red]{hyperref}
\usepackage{xcolor}
\hypersetup{
colorlinks,
linkcolor={cyan!90!black},
citecolor={magenta},
urlcolor={green!40!black}
} 
\newcommand{\R}{\mathbb{R}}
\newcommand{\IC}{\mathbb{C}}

\newcommand{\cH}{\mathcal{H}}


\newcommand{\dist}{\operatorname{dist}}

\renewcommand{\L}{\operatorname{L}} 
\renewcommand{\H}{\operatorname{H}}
\newcommand{\Lloc}{\L_{\operatorname{loc}}} 
\newcommand{\C}{\operatorname{C}} 
\renewcommand{\H}{\operatorname{H}} 


\newcommand{\ree}{{\mathbb{R}^{n+1}}}
\newcommand{\gradx}{\nabla_x}
\renewcommand{\div}{\operatorname{div}}




\renewcommand{\d}{\, \mathrm{d}} 
\renewcommand\Re{\operatorname{Re}}
\renewcommand\Im{\operatorname{Im}}
\DeclareMathOperator{\dom}{\mathsf{D}} 

\def\Xint#1{\mathchoice
{\XXint\displaystyle\textstyle{#1}}%
{\XXint\textstyle\scriptstyle{#1}}%
{\XXint\scriptstyle\scriptscriptstyle{#1}}%
{\XXint\scriptscriptstyle%
\scriptscriptstyle{#1}}%
\!\int}
\def\XXint#1#2#3{{\setbox0=\hbox{$#1{#2#3}{%
\int}$ }
\vcenter{\hbox{$#2#3$ }}\kern-.6\wd0}}
\def\barint{\,\Xint -} 
\def\bariint{\barint_{} \kern-.4em \barint}
\def\bariiint{\bariint_{} \kern-.4em \barint}
\renewcommand{\iint}{\int_{}\kern-.34em \int} 
\renewcommand{\iiint}{\iint_{}\kern-.34em \int} 

\title[Gaussian bounds for degenerate parabolic equations]{On fundamental solutions and Gaussian bounds for degenerate parabolic equations with time-dependent coefficients}

\author{Alireza Ataei}
\email{alireza.ataei@math.uu.se}
\address{Department of Mathematics, Uppsala University, S-751 06 Uppsala,
Sweden}

\author{Kaj Nystr\"{o}m}
\email{kaj.nystrom@math.uu.se}
\address{Department of Mathematics, Uppsala University, S-751 06 Uppsala,
Sweden}

\thanks{}

%
%
%
\date{\today}

\begin{document}

\maketitle

\begin{abstract}
We consider second order degenerate parabolic equations with real, measurable, and time-dependent coefficients. We allow for degenerate ellipticity dictated by a spatial $A_2$-weight. We prove the existence of a fundamental solution and derive Gaussian bounds. Our construction
is based on the original work of Kato \cite{Kato}.
\end{abstract}
\section{Introduction}  We consider parabolic operators of the form
\begin{eqnarray}\label{eq1deg+}
\cH u:=\partial_tu+\mathcal{L}u:= \partial_t u  -w^{-1} \div_{x} (A(x,t) \nabla_{x}u),\  (x,t) \in \R^n \times \R \eqqcolon \ree,
\end{eqnarray}
where the weight $w = w(x)$ is time-independent and belongs to the spatial Muckenhoupt class $A_2(\mathbb R^{n},\d x)$, and the coefficient matrix $A = A(x,t)$ is measurable with real entries and possibly depends on all variables. Degeneracy of $A$ is also dictated by the  weight $w$ in the sense that $A$ satisfies
\begin{equation}
\label{ellip}
 c_1|\xi|^2w(x)\leq A(x,t) \xi \cdot {\xi}, \qquad
 |A(x,t)\xi\cdot\zeta|\leq c_2w(x)|\xi| |\zeta|,
\end{equation}
for some $c_1, c_2\in (0,\infty)$ and for all $\xi,\zeta \in \mathbb R^{n}$, $(x,t) \in \R^{n+1}$. We refer to $[w]_{A_2}$ as the constant of the weight and to $c_1, c_2$ as the ellipticity constants of $A$. We will frequently refer to  $n$, $c_1$, $c_2$, and $[w]_{A_2}$ as the structural constants.

Equations and operators as in  \eqref{eq1deg+}  appear naturally in the study of the fractional powers of parabolic equations and anomalous diffusions, see \cite{LN} and the references therein, and in the context of heat kernels of Schrödinger equations with singular potential, see \cite{IKO}. For contributions to the study of local properties of the solutions to  $\cH u = 0$ and the Gaussian estimates, we refer to \cite{CS,CUR2}. Furthermore, recently in \cite{AEN} we, together with M. Egert, established the Kato (square root) estimate for $\cH$ allowing also for complex coefficients. While this may  be considered as of independent interest, the result proved here and the results of \cite{AAEN} will be combined in a forthcoming work to give a generalization of the work in \cite{AEN} to weighted parabolic operators as in \eqref{eq1deg+} satisfying \eqref{ellip}.

Given  $0<T <\infty$, we in this paper consider the Cauchy problem
\begin{align}\label{cauchy}
\mathrm{(i)}&\ \cH u= \partial_t u  -w^{-1} \div_{x} (A(x,t) \nabla_{x}u)=0\,\mbox{ in }\mathbb R^n\times (0,T),\notag\\
\mathrm{(ii)}&\ \lim_{t \to 0}u(x,t) = f(x).
\end{align}
The equation in (i) is interpreted in the weak sense and according to the following definition.  We refer to the bulk of the paper for definitions and the functional setting.
\begin{defn} A weak solution to \eqref{cauchy} (i) on $\mathbb R^n\times (0,T)$ is
a (real-valued) function $u\in \L^2_{{\rm loc}}((0,T], \H^1_{w,{\rm loc}}(\mathbb R^n))$
such that
\begin{equation}
\label{eq:weak}
\int_0^T\int_{\mathbb R^n} u(x,t){\partial_t\phi(x,t)}
\,{\rm d} w\,{\rm d}t
=\int_0^T\int_{\mathbb R^n} A(x,t)\nabla_x u(x,t)\cdot {\nabla_x\phi(x,t)}
\,{\rm d}x\,{\rm d}t
\end{equation}
for all $\phi\in C_0^\infty(\mathbb R^n\times (0,T))$.
\end{defn} The purpose of this note is to establish the existence of a kernel/fundamental solution
associated to $\cH$, to derive appropriate Gaussian upper bounds for the kernel  in the nature of the original (unweighted) estimates of Aronson \cite{Ar67}, and to use the kernel  to represent weak solutions to \eqref{cauchy}. The quantitative estimates derive will only depend on  $n$, $c_1$, $c_2$, and $[w]_{A_2}$, i.e., on the structural constants.

 Recall that in the case of uniform elliptic coefficients, i.e.,  $w\equiv 1$, the problem in  \eqref{cauchy} was studied in depth in \cite{Ar68}. In {\cite{Ar68}} Aronson considered the energy space
$\L^\infty([0,T], \L^2({\mathbb{R}}^n))\cap \L^2((0,T], \H^1({\mathbb{R}}^n))$, he proved that all solutions $u$ in this space have a trace $f\in \L^2({\mathbb{R}}^n)$, and the solution is uniquely determined by this trace. He obtained existence,  given initial data in $\L^2$, and hence he defined an evolution operator $\Gamma$ such that $u(\cdot,t)=\Gamma(\cdot,t)f$ for $t> 0$. In \cite{Ar67}, pointwise Gaussian estimates of the evolution operator are proved. This result allows one to define weak solutions to \eqref{cauchy} by the integral representation
\begin{align}\label{aro1}
u(x,t)=\int_{{\mathbb{R}}^n}K_t(x,y)f(y)\,{\rm d}y=\int_{{\mathbb{R}}^n}K(x,t,y,0)f(y)\,{\rm d}y,
\end{align}
for $f$ in various spaces of initial conditions, where $K$ is the kernel/fundamental solution associated to $\cH$. Uniqueness is proved in the class of the solutions satisfying
\begin{align}\label{aro2}
\int_{0}^T\int_{{\mathbb{R}}^n} e^{-a |x|^2} |u(x,t)|^2
\,{\rm d}x\,{\rm d}t<\infty,
\end{align}
for some $a>0$,  and existence whenever
$f\in \L^2(e^{-\gamma |x|^2}{\rm d}x)$. In particular, this result covers the case $f\in \L^p({\rm d}x)$, $2\le p\le \infty$.

Given  $x \in \R^n, t>0$, we introduce $w_t(x) =: w(B_{\sqrt{t}}(x))$ where $B_{\sqrt{t}}(x)$ is the Euclidean ball of radius ${\sqrt{t}}$ and center $x$ in $\mathbb R^n$. This note is devoted to the proof of the following result.

\begin{thm}\label{main} Given $f\in \L^2_w(\mathbb R^n)$ and $T>0$, there exists a unique weak solution to the problem in  \eqref{cauchy}, such that
 \begin{align}\label{cauchya}u\in\L^\infty([0,T], \L^2_w({\mathbb{R}}^n))\cap \L^2((0,T], \H_w^1({\mathbb{R}}^n)),
  \end{align}
  and
  \begin{align}\label{cauchyb}u(\cdot,t)\to f(\cdot)\mbox{ in $ \L^2_w(\mathbb R^n)$ as $t\to 0^+$}.
  \end{align} The unique solution $u$ can be represented as
\begin{align}\label{cauchy+}
u(x,t)= \int_{\R^n} K_t(x,y) f(y)\, w(y) \d y,\mbox{ for all $(x,t)\in\mathbb R^n\times (0,T)$,}
\end{align}
where  $K_t(x,y)=K(x,t,y,0)$ is the fundamental solution of $\cH$, satisfying
\begin{align}\label{cauchy+all}
 \int_{\R^n} K_t(x,y) w(y) \d y= 1,\mbox{ for all $(x,t)\in\mathbb R^n\times (0,T)$.}
\end{align}
  Furthermore, there exist $c$,
$1\leq c<\infty$, and $\nu>0$, both depending only on the structural constants,  such that
\begin{equation}\label{est1}
K_t(x,y) \leq \frac{c}{\sqrt{w_t(x) w_t(y)}}
e^{-\frac{|x-y|^2}{ct}},
\end{equation}
for all $t >0, x,y \in \R^n$, and
\begin{align}\label{est2}
|K_t(x+h,y)- K_t(x,y)|&\leq \frac{c}{\sqrt{w_t(x) w_t(y)}}
\biggl(\frac{|h|}{t^{{1}/{2}} + |x-y|}\biggr)^{\nu} e^{- \frac{|x-y|^2}{ct}},\notag\\
|K_t(x,y+h)- K_t(x,y)| &\leq \frac{c}{\sqrt{w_t(x) w_t(y)}}
\biggl(\frac{|h|}{t^{{1}/{2}} + |x-y|}\biggr)^{\nu} e^{- \frac{|x-y|^2}{ct}},
\end{align}
for all $t >0, x,y, h \in \R^n$, satisfying $2|h| \leq t^{{1}/{2}} + |x-y|$.
\end{thm}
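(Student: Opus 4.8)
\emph{Proof sketch.} The plan is to carry out the classical Aronson--Nash--Moser scheme for the non-autonomous operator $\cH$ inside the Gelfand triple adapted to the weight $w$, invoking the De Giorgi--Nash--Moser regularity theory for $A_2$-degenerate parabolic equations (see \cite{CS,CUR2}) only where unavoidable. \emph{First}, for existence, uniqueness and the energy bound I would use the sesquilinear-form approach in the spirit of Kato \cite{Kato} and Lions. On the triple $\H^1_w(\R^n)\hookrightarrow\L^2_w(\R^n)\hookrightarrow(\H^1_w(\R^n))^\ast$ consider $a_t(u,v)=\int_{\R^n}A(x,t)\nabla_x u\cdot\nabla_x v\d x$; by \eqref{ellip} this is uniformly bounded on $\H^1_w(\R^n)$ and satisfies a Gårding inequality with constants depending only on $c_1,c_2$. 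The Lions existence theorem then yields, for every $f\in\L^2_w(\R^n)$, a unique $u\in\L^2((0,T),\H^1_w(\R^n))$ with $\partial_t u\in\L^2((0,T),(\H^1_w(\R^n))^\ast)$ solving \eqref{cauchy}(i) in the sense of the Definition above (a density argument identifies the two notions), with $u\in\C([0,T],\L^2_w(\R^n))$ and $\sup_{0\le t\le T}\|u(t)\|_{\L^2_w}^2+\int_0^T\|\nabla_x u\|_{\L^2_w}^2\d t\le C\|f\|_{\L^2_w}^2$. This gives \eqref{cauchya}--\eqref{cauchyb} and a family of contractions $\Gamma(t,s)\colon\L^2_w(\R^n)\to\L^2_w(\R^n)$, $0\le s\le t\le T$, with $\Gamma(t,r)\Gamma(r,s)=\Gamma(t,s)$; write $\Gamma_t:=\Gamma(t,0)$.

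\emph{Second}, since $A$ is real, testing the equation against $u^-$ and against $(u-\|f\|_\infty)^+$ shows $\Gamma_t$ is positivity preserving and $\L^\infty$-contractive, hence $\L^p_w$-contractive for all $p\in[1,\infty]$. The weighted Sobolev inequality of Fabes--Kenig--Serapioni (valid for $w\in A_2$ with constants controlled by $n$ and $[w]_{A_2}$) feeds a local parabolic Nash--Moser iteration which, together with the energy bound, gives $|\Gamma_t f(x)|\lesssim w_t(x)^{-1/2}\|f\|_{\L^2_w}$; composing through $\Gamma_t=\Gamma(t,t/2)\Gamma(t/2,0)$ and using positivity and the analogous bound for the adjoint problem produces a nonnegative kernel $K_t(x,y)=K(x,t,y,0)$ with $\Gamma_t f(x)=\int_{\R^n}K_t(x,y)f(y)\,w(y)\d y$ and the near-diagonal bound $K_t(x,y)\lesssim (w_t(x)w_t(y))^{-1/2}$ for $|x-y|\le\sqrt t$; this is \eqref{cauchy+}, extended to all of $\L^2_w(\R^n)$ by density. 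Interior Hölder regularity of weak solutions of $\cH u=0$ in the degenerate setting \cite{CS,CUR2} makes $K$ a genuine (locally Hölder) function on $\{t>0\}$.

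\emph{Third}, I would prove the weighted Davies--Gaffney estimate $\|\Gamma_t f\|_{\L^2_w(E)}\le \exp\!\bigl(-\dist(E,F)^2/(Ct)\bigr)\|f\|_{\L^2_w(F)}$ for Borel $E,F$ and $\supp f\subset F$, by the usual exponential-weight energy computation with a bounded Lipschitz $\rho$, $|\nabla_x\rho|\le 1$ --- here both bounds on $A$ in \eqref{ellip} carry the same factor $w$, so the weight cancels in the decisive estimate. To upgrade this and the near-diagonal bound to the full Gaussian upper bound \eqref{est1} I would run Davies' perturbation method: conjugating $\Gamma_t$ by $e^{\lambda\phi}$ ($\phi$ bounded Lipschitz, $|\nabla_x\phi|\le 1$) perturbs $a_t$ by first- and second-order terms of order $\lambda$ and $\lambda^2$; rerunning the Nash--Moser iteration of the previous step for the perturbed (non-self-adjoint) form yields $e^{\lambda(\phi(x)-\phi(y))}K_t(x,y)\lesssim e^{c\lambda^2 t}(w_t(x)w_t(y))^{-1/2}$ with structural constants, and optimizing over $\lambda$ and $\phi$ gives \eqref{est1}. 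For \eqref{est2} I would regard $x\mapsto K_t(x,y)$ as a weak solution of $\cH u=0$ on a parabolic cylinder of radius $\asymp\sqrt t$ about $(x,t)$ --- which lies in $\{t>0\}$ and is separated in time from the pole $(y,0)$ --- apply the scale-invariant interior Hölder estimate of \cite{CS,CUR2}, insert \eqref{est1} for the supremum, and close with a routine case analysis (distinguishing $|h|$ small versus comparable to the length scale, using that doubling gives $w_t(x+h)\asymp w_t(x)$ and that any polynomial factor $(1+|x-y|/\sqrt t)^{\nu}$ is absorbed into an arbitrarily small loss in the Gaussian exponent); the $y$-difference is handled identically via the adjoint $-\partial_t v-w^{-1}\div_{x}(A^{T}\nabla_x v)=0$, whose kernel is $(x,y)\mapsto K_t(y,x)$. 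Finally, \eqref{cauchy+all} --- that $m_t(x):=\int_{\R^n}K_t(x,y)\,w(y)\d y$ equals $1$ --- follows because $m$ is, by monotone approximation by $\Gamma_t$ applied to cutoffs and the local De Giorgi--Nash bounds, a bounded weak solution of $\cH m=0$ with $m(\cdot,0)=1$ (the far-field contribution vanishing as $t\to0$ by \eqref{est1}), whence uniqueness applied to $1-m$ gives $m\equiv1$; equivalently, test the weak formulation against cutoffs and kill the boundary flux with a Caccioppoli tail estimate derived from \eqref{est1}.

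The ingredients that transfer with essentially no change from the unweighted case (Lions' theorem, positivity and the $\L^p_w$ contractions, the $\L^2$ off-diagonal estimate) are not the issue. The main obstacle, I expect, is making the Nash--Moser iteration --- both in plain form (second step) and in its Davies-perturbed form (third step) --- genuinely \emph{scale- and base-point-invariant}, so that every constant, including the $e^{c\lambda^2 t}$ factor, depends only on $n,c_1,c_2,[w]_{A_2}$; this rests on the normalized, self-improving versions of the weighted Sobolev and Poincaré inequalities for $A_2$-weights and on carefully tracking how the ball masses $w_t(\cdot)$ enter each iteration step. A secondary delicate point is the conservation law \eqref{cauchy+all}, where the flux through large spheres must be controlled uniformly.
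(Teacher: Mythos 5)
Your outline is sound and reaches the same destination, but it takes a genuinely different route at the construction stage. You build the evolution family directly from Lions' variational theorem on the triple $\H^1_w\hookrightarrow\L^2_w\hookrightarrow(\H^1_w)^\ast$, which handles merely measurable time dependence in one stroke; the paper instead mollifies $A$ in time, adds a zero-order term $1/l$, invokes Kato's abstract theorem \cite{Kato} to get evolution operators $U_l(t,s)$ with strong differentiability and range in $\dom(\mathcal{L}_l^t)$ (Theorem \ref{fundamental}), and only at the very end passes to the limit $l\to\infty$. Your route buys a cleaner existence proof with no limiting argument; the paper's buys pointwise-in-$t$ differentiability of quantities like $\|\phi\,U_l(t,0)f\|^2_{\L^2_w}$, which it uses freely in the positivity and off-diagonal lemmas (with Lions you would rerun those computations via absolute continuity of $t\mapsto\|u(t)\|^2_{\L^2_w}$, which works but should be said). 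The second divergence is in the $\L^2_w\to\L^\infty$ step: you propose running a weighted Nash--Moser iteration, and then rerunning it for the Davies-conjugated form to get $e^{\lambda(\phi(x)-\phi(y))}K_t(x,y)\lesssim e^{c\lambda^2t}(w_t(x)w_t(y))^{-1/2}$. The paper avoids perturbing the operator altogether: it quotes the Harnack inequality for time-dependent degenerate equations (\cite[Thm.~A]{Ishige}, Lemma \ref{Harnack}) for the \emph{unperturbed} equation and combines it with the $\L^2$ off-diagonal estimate over a dyadic annular decomposition (Lemma \ref{lem:Davieses}, following \cite{CUR0,CUR1} and \cite{Davies}), so that the conjugation enters only through the exponential weights in the off-diagonal bound. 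Both work; the paper's version outsources the hard regularity input and never has to track how $\lambda$ enters the iteration, which is precisely the point you flag as your main obstacle.

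One step needs repair as written: you deduce \eqref{cauchy+all} by ``uniqueness applied to $1-m$,'' but the uniqueness you have established (and the one claimed in the theorem) is in the class $\L^\infty([0,T],\L^2_w)\cap\L^2((0,T],\H^1_w)$, and the constant function $1$ is not in $\L^2_w(\R^n)$ unless $w(\R^n)<\infty$. A uniqueness theorem for bounded (or Gaussian-weighted) solutions is a separate, nontrivial statement in the weighted setting. Your parenthetical alternative --- testing against cutoffs and killing the flux with a Caccioppoli-type tail estimate derived from \eqref{est1} --- is the argument to actually carry out, and is closer in spirit to what the paper does (the paper obtains \eqref{cauchy+all} by passing the identity for the approximating kernels $K^l_t$ to the limit via dominated convergence against the Gaussian majorant). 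Also, for the interior H\"older estimates and the Harnack inequality you cite \cite{CS,CUR2}, which cover autonomous coefficients; in the time-dependent degenerate setting the reference you need is \cite{Ishige}.
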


\begin{rem}\label{rema}
    The constant $\frac{1}{\sqrt{w_t(x) w_t(y)}}$ in Theorem \ref{main} can be changed into one of
    \begin{align*}
        \frac{1}{w_t(x)}, \quad \frac{1}{w_t(y)}, \quad \frac{1}{\max{(w_t(x),w_t(y))}},
    \end{align*}
    if the constant $c$ is replaced with {$\Tilde{c}$ which also depends on the structural constants}, see \cite[Rem. 3]{CUR1}.
\end{rem}

As discussed, in the non-degenerate case  $w\equiv 1$, Theorem \ref{main} is well known, and we refer to  \cite{Ar67,F} for the existence of the fundamental solution. After the groundbreaking work {of Nash} in \cite{Na}, in which certain estimates of the fundamental solutions and  H{\"o}lder continuity of the weak solutions were established, there were several important contributions in the field. As mentioned in \cite{Ar67}, two-sided Gaussian bounds for the fundamental
solutions were proved by employing by now the standard parabolic Harnack inequality. Subsequently,  in \cite{FaSt} it was shown that Nash’s method can also be used to
prove Aronson’s Gaussian bounds.

The quantitative estimates stated in Theorem \ref{main} were proved
in \cite{CUR0, CUR1} assuming in addition that $A$ is symmetric and independent of $t$. We note that there are certain differences between \cite{Ar67,Ar68} and the approach used in \cite{CUR0, CUR1}. Indeed, in contrast to \cite{Ar67,Ar68}, \cite{CUR0, CUR1} employ an argument along the lines of Davies \cite{Davies} to derive Gaussian bounds. The latter argument relies on off-diagonal estimates, the Harnack inequality, and an $\L^{\infty}(\R^n) \to \L^2_w(\R^n)$ bound for the solution operator. Also, for the existence part, in \cite{CUR0, CUR1} the fact that $\mathcal{L}= -{w}^{-1} \div_x (A(x) \nabla_x)$ is induced through the accretive sesquilinear form, $$\int_{\R^n}  A(x) \nabla_x u\cdot \overline{\nabla_x v}  \, \d x,$$  is used. As a consequence, the exponential operator $e^{-t \mathcal{L}}$ is well-defined and  the action of $ e^{-t  \mathcal{L}} $ on $\L^2_w(\R^n)$ induces the fundamental solution. However, this idea does not work if $A$ is time-dependent.

The contribution of this note is that we generalize {the result of Cruz-Uribe and Rios in \cite[Thm. 1.3]{CUR0, CUR1}} to operators with (not necessarily symmetric) time-dependent coefficients. To accomplish this, we have to proceed differently compared to \cite{CUR0, CUR1}, avoiding the use of the exponential operator $e^{-t \mathcal{L}}$, and we do so by first returning to the outstanding
work of Kato \cite{Kato}. In {\cite[Thm. I]{Kato}}, existence and uniqueness of solutions to the initial value problem
for the evolution equation
\begin{align}\label{evo}
\frac {\d u}{\d t}+\mathcal{A}(t)u = f(t),\, 0<t<T,
\end{align}
were studied. Here, the unknown $u = u(t)$ and the inhomogeneous term $f(t)$ are functions from the interval $[0,T]$ to a Banach space $\mathcal{X}$, whereas $\mathcal{A}(t)$ is a function from $[0, T]$ to the set of (in general unbounded) linear operators acting in $\mathcal{X}$.  Given initial data in
$\mathcal{X}$,  in \cite{Kato} the existence and uniqueness of solutions to the abstract Cauchy problem in \eqref{evo} are proved assuming, roughly speaking, that (i) $-\mathcal{A}(t)$ is the infinitesimal generator of an
analytic semigroup of operators; (ii) for some $h=1/m$, where $m$ is a positive integer, the domain of $(\mathcal{A}(t))^h$  is independent of $t$;
(iii) $\mathcal{A}(t)$ varies smoothly
with $t$, see \cite{Kato} and our discussion below.

In particular, to use  \cite[Thm. I, Thm. III]{Kato} and to prove Theorem \ref{main}, we first note that in our case, $\mathcal{A}(t)$ is formally induced through
$$\langle \mathcal{A}(t) u,v\rangle:=\langle \mathcal{L} u,v\rangle=\int_{\R^n}  A(x,t) \nabla_x u\cdot\overline{\nabla_x v} \,  \d x.$$
While $\mathcal{A}(t)$ initially is an unbounded operator on $\L^2_w(\R^n)$, we consider its restriction to
\begin{align}\label{domain}
\dom( \mathcal{A}(t)):= \{u \in \H_w^1({\mathbb{R}}^n) :  \mathcal{A}(t)u \in \L^2_w(\R^n) \}.
\end{align}
Assuming sufficient regularity in $t$, (i) above follows from ellipticity. Furthermore, (ii) with $m=2$ is a consequence of the solution of the Kato problem for degenerate elliptic operators, see \cite{CUR2}. However, if we have sufficient regularity in $t$, then (ii) also follows from \cite{Kato} for some $m\geq 3$ and in this sense the solution of the Kato problem is not needed. Independent of method to conclude (ii), we prove, after an initial regularization of $A$ in the time component and following \cite{Kato},  the existence of a kernel/fundamental solution to certain operators approximating our original operator. We then prove appropriate off-diagonal estimates by following the argument in \cite[Lem.1]{Davies}, and we proceed {as in}
 \cite{CUR0, CUR1} to establish upper  Gaussian bounds. Finally, we remove the regularization parameters and pass them to the limit in a convergence argument.

After some preliminaries, the rest of the paper is devoted to the proof of Theorem \ref{main}.

\section{Preliminaries and basic assumptions}
\label{sec:prel}
For general background and the results concerning weights cited here, we refer to \cite[Ch.~V]{Stein}.  The weight $w=w(x)$ is a real-valued function belonging to the Muckenhoupt class $A_2(\mathbb R^{n},\d x)$, that is,
\begin{equation}
	\label{A2}
[w]_{A_2} \coloneqq \sup_Q \bigg(\barint_{Q} w\, \d x\bigg)\,
\bigg(\barint_{Q} w^{-1} \, \d x \bigg) < \infty,
\end{equation}
where the supremum is taken with respect to all cubes $Q\subset\mathbb R^n$. We introduce the measure $\d w(x):=w(x) \d x$ on $\mathbb R^{n}$, and we write $w(E) := \int_{E} \d w$ for all Lebesgue measurable sets $E \subset \R^n$. It follows from \eqref{A2} that there are constants $\eta \in (0,1)$ and $\beta>0$, depending only on $n$ and $[w]_{A_2}$, such that
\begin{align}\label{ainfw}
	\beta^{-1} \biggl (\frac {|E|}{|Q|}\biggr )^{\frac{1}{2 \eta}} \leq \frac {w(E)}{w(Q)}\leq \beta \biggl (\frac {|E|}{|Q|}\biggr )^{2\eta},
\end{align}
whenever $E\subset Q$ is measurable and where $|\cdot|$ denotes Lebesgue measure in $\R^n$.  In particular, there exists a constant $D$ only depending on $[w]_{A_{2}}$ and $n$,  called the doubling constant for $w$, such that
\begin{align}\label{eq:doubling}
	w(2Q)\leq Dw(Q) \mbox{ for all cubes } Q\subset\mathbb R^n.
\end{align}
Since, by {equation \eqref{A2}}, the function $\frac{1}{w}$ belongs to $A_2(\R^n, \d x)$, \eqref{eq:doubling} holds for $\frac{1}{w}.$
For every $p\geq 1$ and $K \subset \R^n$, the space $\L^p_w(K)$ is the space of  all measurable functions $f: \R^n \to \IC$ such that
\begin{align*}
    \|f\|_{\L^p_w(K)} := \biggl(\int_{K} |f|^p \,  \d w \biggr)^{\frac{1}{p}} < \infty.
 \end{align*}
 We denote $\L^p_w := \L^p_w(\R^n).$

 We define $\langle \, , \, \rangle_w$ as the inner product induced by the norm $\|\,\|_{\L^2_w}$. Using the $A_2$-condition, we have
\begin{align}\label{embedd}
	\L^2_w \subset \Lloc^1(\R^n, \d x),
\end{align}
and the class $\C_0^\infty(\R^n)$ of smooth and compactly supported test functions is dense in $\L^2_w$ via the usual truncation and the convolution procedure~\cite[Sec.~1]{Kilp}. Finally, we  write $\H_w^{1} := \H_w^{1}(\R^n)$ for the space of all $f \in \L_w^2$ for which the distributional gradient $\gradx f$ is (componentwise) in $\L_w^2$, and we equip the space with the norm $$\|\cdot\|_{\H_w^1} \coloneqq (\|\cdot\|_{2,w}^2 + (\|\gradx \cdot\|_{2,w}^2)^{1/2}.$$
By construction $\H^1_w$ is a Hilbert space and the standard truncation and convolution techniques yield that $\C_0^\infty(\R^n)$ is dense in $\H_w^{1}$, see~\cite[Thm.~2.5]{Kilp}. { We also introduce the space $\H^{1,1}_{w,0}( \R^n \times (0,T))$ as the completion of $C_0^{\infty}(\R^n \times (0,T))$ with the norm}
\begin{align*}
    (\|\cdot \|^2_{2,w}+ \|\partial_t \cdot  \|^2_{2,w} + \|\nabla_x \cdot \|^2_{2,w})^{1/2}.
\end{align*}
Given an operator  $\mathcal{L}$ defined on a subset of $\L^2_w$, we introduce
\begin{align*}
    \dom(\mathcal{L}) := \{u \in \L^2_w:\mathcal{L}(u) \in \L^2_w \}.
\end{align*}
 A quadratic form $\Phi: \H^1_w \to \R$ is said to be closed if for every sequence $u_n \in \H^1_w$, satisfying
\begin{align*}
    \lim_{m,n \to \infty}  \Phi[u_m - u_n]&=0\mbox{ and }\lim_{i \to \infty} \|u_n-u\|_{\L^2_w} =0,
\end{align*}
 for some $u \in \L^2_w$, we have  $u \in \H^1_w$ and that
 \begin{align*}
     \lim_{n \to \infty} \Phi[u_n-u] =0.
 \end{align*}
From now on, the notation $A \lesssim B$ means that $A \leq c B$ for some constant $c$, depending at most on the structural constants unless otherwise stated. The notations $A \gtrsim B$ and $A \sim B$ should be interpreted similarly.

\section{Proof of Theorem \ref{main}: uniqueness}

We here prove the uniqueness part of Theorem \ref{main} by proceeding along the lines of the corresponding proof in {\cite[Lemma 1]{Ar68}}. The argument uses arguments to be  found in \cite{Evans} and properties of Steklov averages. However, since the argument uses Steklov averages and contains some subtleties,  full details are supplied.

To prove uniqueness, it is enough to prove that if $u$ is a weak solution to the Cauchy problem such that
 $$u\in \L^\infty([0,T], \L^2_w({\mathbb{R}}^n))\cap \L^2((0,T], \H_w^1({\mathbb{R}}^n)),$$ and $u(\cdot,t)\to 0$ in $ \L^2_w(\mathbb R^n)$ as $t\to 0^+$, then $u=0$ a.e. in $\mathbb R^n\times [0,T]$.  We note that by an approximation argument in $C^{\infty}_0(\R^{n} \times (0,T)),$  test functions in the space $\H^{1,1}_{w,0}(\R^n \times (0,T))$ are allowed in the weak formulation of the problem and $ \partial_t u \in \L^2([0,T],\H^{-1}_w(\R^n)).$ Moreover, by Lemma \ref{lem:continuityintime} below, we can assume that $u \in C([0,T],\L^2_w(\R^n))$ after redefining it on a set of measure zero.

 To proceed, we fix $T' \in (0,T)$ 
and consider the test function $$\phi(x,t) := \zeta_h(t) \barint_{t-h}^{t+h} u(x,s)\, \d s,$$ where
 \begin{align*}
     \zeta_h(t) := \begin{cases}
     0, \quad & t \in [0,h],\\
 \frac{t-h}{h}, \quad & t \in (h,2h],\\
         1, \quad &t \in (2h,T'-2h],\\
           \frac{T'-h-t}{h} , \quad &t \in (T'-2h,T'-h],\\
          0, \quad & t \in (T'-h,T].
     \end{cases}
 \end{align*}
 Using the equation for $u$,
 \begin{equation}
 \label{eq:testequation}
 \begin{aligned}
    &\int_0^T \int_{\R^n} u(x,t) \partial_t \left(\zeta_h(t) \barint_{t-h}^{t+h} u(x,s)\, \d s\right) \,  \d w \d t
    \\&= \int_0^T \int_{\R^n} A(x,t) \nabla_x u(x,t) \cdot \nabla_x \left(\zeta_h(t) \barint_{t-h}^{t+h} u(x,s)\, \d s\right)\,  \d x \d t.
 \end{aligned}
 \end{equation}
Using $u\in C([0,T],\L^2_w(\R^n))$ and Lebesgue's dominated convergence, we obtain
\begin{equation}
\label{eq:convergencetoL2}
\begin{aligned}
& \lim_{h \to 0}   \int_0^T \int_{\R^n} u(x,t) \left( \barint_{t-h}^{t+h} u(x,s)\, \d s\right) \partial_t\zeta_h(t)   \,  \d w \d t
 \\ &= \int_{\R^n} u^2(x,0) \, \d w - \int_{\R^n} u^2(x,T') \, \d w,
\end{aligned}
\end{equation}
and
\begin{equation}
\label{eq:convergencetoH^1}
\begin{aligned}
& \lim_{h \to 0}    \int_0^T \int_{\R^n} A(x,t) \nabla_x u(x,t) \cdot \nabla_x \left(\zeta_h(t) \barint_{t-h}^{t+h} u(x,s)\, \d s\right)\,  \d x \d t
 \\ &=  \int_0^{T'} \int_{\R^n} A(x,t) \nabla_x u(x,t) \cdot \nabla_x u(x,t) \, \d x \d t.
\end{aligned}
\end{equation}
Furthermore,
\begin{equation*}
\begin{aligned}
    &\int_0^T \int_{\R^n} u(x,t) \zeta_h(t) \partial_t \left( \barint_{t-h}^{t+h} u(x,s)\, \d s\right) \,  \d w \d t \\&= \frac{1}{2h} \int_0^T \int_{\R^n} u(x,t) \zeta_h(t) ( u(x,t+h) - u(x,t-h)) \,  \d w \d t
    \\ &= \frac{1}{2h} \int_0^T \int_{\R^n} u(x,t) u(x,t+h) (\zeta_h(t)-\zeta_h(t+h))  \,  \d w \d t.
    \end{aligned}
\end{equation*}
We write
\begin{align*}
  & \frac{1}{2h} \int_0^T \int_{\R^n} u(x,t) u(x,t+h) (\zeta_h(t)-\zeta_h(t+h))  \,  \d w \d t=:J_1+J_2 +J_3+ J_4,
\end{align*}
where
\begin{align*}
   J_1&:= -\frac{1}{2h} \int_0^h \frac{t}{h} \int_{\R^n} u(x,t) u(x,t+h)  \,  \d w \d t,
   \\ J_2&:=- \frac{1}{2h} \int_h^{2h} \left(1- \frac{t-h}{h} \right) \int_{\R^n} u(x,t) u(x,t+h)   \,  \d w \d t,
   \\J_3&:=-\frac{1}{2h} \int_{T'-3h}^{T'-2h} \left(\frac{T'-2h-t}{h}-1\right) \int_{\R^n} u(x,t) u(x,t+h)   \,  \d w \d t,
   \\ J_4&:=\frac{1}{2h} \int_{T'-2h}^{T'-h} \frac{T'-h-t}{h} \int_{\R^n} u(x,t) u(x,t+h)   \,  \d w \d t.
\end{align*}
We will establish the convergence of each term on the right-hand side as $h\to 0$. First, by Cauchy-Schwarz inequality, we obtain
\begin{align*}
    &\left|J_1 + \frac{1}{2h} \int_0^h \frac{t}{h} \int_{\R^n} u^2(x,0) \, \d w  \d t\right |\\
   & \leq \frac{1}{2h} \int_0^h \frac{t}{h} \int_{\R^n}  |u(x,t) - u(x,0)| \,|u(x,t+h)| \d w \d t
   \\& + \frac{1}{2h} \int_0^h \frac{t}{h} \int_{\R^n}  |u(x,0)| \,|u(x,t+h)-u(x,0)| \, \d w \d t
   \\& \leq \frac{1}{2} \left(\sup_{0 \leq t \leq T} \int_{\R^n} |u(x,t)|^2 \, \d w \right)^{\frac{1}{2}} \left(\barint_0^h  \int_{\R^n}  |u(x,t) - u(x,0)|^2 \d w \d t \right)^{\frac{1}{2}}
   \\ &+\frac{1}{2} \left(\int_{\R^n} |u(x,0)|^2 \, \d w \right)^{\frac{1}{2}} \left(\barint_h^{2h}  \int_{\R^n}  |u(x,t) - u(x,0)|^2 \d w \d t \right)^{\frac{1}{2}}.
\end{align*}
Letting $h \to 0$, we derive that
\begin{align*}
   \lim_{h \to 0} J_1 = -\frac{1}{4} \int_{\R^n} u^2(x,0) \, \d w.
\end{align*}
Similarly,
\begin{align*}
    \lim_{h \to 0} J_2 = -\frac{1}{4} \int_{\R^n} u^2(x,0) \, \d w.
\end{align*}
For the term $J_3$, by Cauchy-Schwarz inequality, we have
\begin{align*}
    &\left| J_3 + \frac{1}{2h} \int_{T'-3h}^{T'-2h} \left(\frac{T'-2h-t}{h}-1\right) \int_{\R^n} u^2(x,T')   \,  \d w \d t  \right|
    \\ &\leq  \frac{1}{2h} \int_0^h  \int_{\R^n}  |u(x,t) - u(x,T')| \,|u(x,t+h)| \d w \d t
   \\& + \frac{1}{2h} \int_0^h \int_{\R^n}  |u(x,T')| \,|u(x,t+h)-u(x,T')| \, \d w \d t
   \\& \leq \frac{1}{2} \left(\sup_{0 \leq t \leq T} \int_{\R^n} |u(x,t)|^2 \, \d w \right)^{\frac{1}{2}} \left(\barint_0^h  \int_{\R^n}  |u(x,t) - u(x,T')|^2 \d w \d t \right)^{\frac{1}{2}}
   \\ &+ \frac{1}{2} \left(\int_{\R^n} |u(x,T')|^2 \, \d w \right)^{\frac{1}{2}} \left(\barint_h^{2h}  \int_{\R^n}  |u(x,t) - u(x,T')|^2 \d w \d t \right)^{\frac{1}{2}}.
\end{align*}
Hence, letting $h \to 0$ and using $u\in C([0,T],\L^2_w(\R^n))$,
we derive that
\begin{align*}
    \lim_{h \to 0} J_3 = \frac{1}{4} \int_{\R^n} u^2(x,T') \, \d w.
\end{align*}
Similarly,
\begin{align*}
     \lim_{h \to 0} J_4 = \frac{1}{4} \int_{\R^n} u^2(x,T') \, \d w.
\end{align*}
Put together, we can conclude that
\begin{align}
    \label{eq:thecriticalterm}
&\lim_{h \to 0}   \frac{1}{h} \int_0^T \int_{\R^n} u(x,t) u(x,t+h) (\zeta_h(t)-\zeta_h(t+h))  \,  \d w \d t \notag\\
&=\frac{1}{2}  \int_{\R^n} u^2(x,T') \, \d w - \frac{1}{2} \int_{\R^n} u^2(x,0) \, \d w.
\end{align}
Combining \eqref{eq:testequation}, \eqref{eq:convergencetoL2}, \eqref{eq:convergencetoH^1}, and \eqref{eq:thecriticalterm}, we see that
\begin{align*}
 & \frac{1}{2}  \int_{\R^n} u^2(x,0) \, \d w - \frac{1}{2} \int_{\R^n} u^2(x,T') \, \d w\\
   &=\int_0^{T'} \int_{\R^n} A(x,t) \nabla_x u(x,t) \cdot \nabla_x u(x,t) \, \d x \d t\geq 0.
\end{align*}
Hence, using that $u(x,0)=0$, we see that $u=0$ in $[0,T'] \times \R^n$. This completes the proof.

\begin{lem}
\label{lem:continuityintime}
    Let $T>0, u \in \L^2([0,T],\H^1_w(\R^n))$, and $\partial_t u \in \L^2([0,T],\H^{-1}_w(\R^n))$. Then, $u \in C([0,T], \L^2_w(\R^n))$ after being redefined on a set of measure zero.
\end{lem}
\begin{proof}
    The proof follows the argument in \cite[Thm. 3, Ch. 5.9]{Evans}. First, we extend the function $u$ in a larger time interval $[-\theta,T+\theta]$ for $\theta>0$ and define a regularization $u^\varepsilon=\eta_{\varepsilon} \ast u$ where $\eta_{\varepsilon} \in C^{\infty}_0(\R)$ is a smooth approximation of the identity. For $\varepsilon, \delta >0$ and $0 \leq s \leq r \leq T$, we have
    \begin{align*}
   & \|u^\varepsilon(\cdot,r) - u^\delta(\cdot,r)\|^2_{\L^2_w} =\|u^\varepsilon(\cdot,s) - u^\delta(\cdot,s)\|^2_{\L^2_w}  + \int_s^r \frac{d}{d t}  \|u^\varepsilon - u^\delta\|^2_{\L^2_w} \, \d t
    \\ &\leq \|u^\varepsilon(\cdot,s) - u^\delta(\cdot,s)\|^2_{\L^2_w} + 2\left(\int_s^r \|u^\varepsilon - u^\delta\|^2_{\H^1_w} \, \d t\right)^{\frac{1}{2}} \left(\int_s^r \|\partial_t u^\varepsilon - \partial_t u^\delta\|^2_{\H^{-1}_w} \, \d t\right)^{\frac{1}{2}},
    \end{align*}
 where we used Cauchy-Schwarz inequality on the last line. Now, note that
\begin{align*}
 \int_s^r \|\partial_t u^\varepsilon - \partial_t u^\delta\|^2_{\H^{-1}_w} \, \d t&=\int_s^t\left(\sup_{\phi} \int_{\R^n}  \partial_t \left(u^\varepsilon - u^\delta\right) \, \phi\, \d w\right)^2 \, \d t
 \\&\leq \int_s^r \left(\left|\eta_{\varepsilon}-\eta_{\delta}\right| \ast \|\partial_t u(t,\cdot)\|_{\H^{-1}_w} \right)^2 \d t
 \\ & \leq \|\partial_t u\|^2_{\L^2([0,T];\H^{-1}_w(\R^n))} \left(\int_{\R} |\eta_{\varepsilon}-\eta_{\delta}| \, \d t\right)^{2}
\end{align*}
where the supremum is take over $\phi \in \H^1_w(\R^n)$ where $\|\phi\|_{\H^1_w(\R^n)}=1$, and we used Young's convolution inequality on the last line.
Hence, by fixing $0<s<T$ such that $$\lim_{\varepsilon \to 0}\|u^{\varepsilon}(\cdot,s)-u(\cdot,s)\|_{\L^2_w}=0,$$ we conclude that
\begin{align*}
    \limsup_{\varepsilon \to 0, \delta \to 0}  \sup_{0\leq r \leq T}\|u^\varepsilon(\cdot,r) - u^\delta(\cdot,r)\|^2_{\L^2_w} =0.
\end{align*}
In conclusion, $u^\varepsilon$ converges in $C([0,T],\L^2_w(\R^n))$ to a function $v \in C([0,T],\L^2_w(\R^n))$ as $\varepsilon \to 0.$ Since $u^\varepsilon(t,\cdot)$ converges to $u(t,\cdot)$ in $\L^2_w(\R^n)$ for a.e. $t \in [0,T],$ we conclude that $u=v$ a.e.
\end{proof}

\section{Proof of Theorem \ref{main}: existence and kernel representation}
We here prove the existence part of Theorem \ref{main} and the stated representation in terms of a kernel. Our first step is to use \cite[Thm. III]{Kato}, and to do so we in particular have to work with coefficients which are smooth in the time variable. Hence, we have to prove uniform estimates for a class of approximating operators and then pass to the limit. We divide the argument into a number of relevant steps.

\subsection{Existence of linear evolution operators following Kato} Let $\rho\in C^\infty_0(-1,1)$ be a non-negative function which integrates to $1$. Given $l\in\mathbb R_+$ and $\rho_l(t) = {l\rho(lt)},$
we introduce $A_l(\cdot,t) = \rho_l \ast A(\cdot,t)$, i.e., we mollify the matrix-valued function $A$ in the time variable only. Define the sesquilinear form
\begin{align*}
    \Phi^l(t)(u,v) := \int_{\R^n} w^{-1}A_l(x,t) \nabla_x u \cdot \overline{\nabla_x v} \, \d w +  {l}^{-1} \int_{\R^n} u  \, \overline{v} \, \d w,
\end{align*}
 for every $u,v \in \H^1_w$ and $\mathcal{L}_l^t$ through
 $$\langle \mathcal{L}_l^tu,v\rangle_w:= \Phi^l(t)(u,v).$$
Formally,
 $$\mathcal{L}_l^t= -w^{-1}\div_x (A_l(x,t) \nabla_{x}) + {1}/{l}.$$
In $\mathcal{L}_l^t$ and $\Phi^l(t)$, $t$ should be seen as a parameter.

 Let $\Phi^l(t)[u] := \Phi^l(t)(u,u)$ for every $u \in \H^1_w$. Then,
 \begin{align}
 \label{eq:quadraticform}
    \Im \Phi^l(t)[u] \leq \frac{c_2}{c_1} \Re \Phi^l(t)[u], \quad \Re  \Phi^l(t)[u] \geq \min\{c_1,1/l\}\|u\|^2_{\H^1_w},
 \end{align}
 for every $t \in \R, u \in \H^1_w.$
 Let $u_n \in \H^1_w$  be a sequence such that $$\lim_{n \to \infty} \|u_n-u\|_{\L^2_w}=0,$$
 for $u \in \L^2_w$, and $$\lim_{m,n \to \infty} \Re  \Phi^l(t)[u_m - u_n]=0.$$ Then, by \eqref{eq:quadraticform}, $u_n$ is a Cauchy sequence in the Hilbert space $\H^1_w$. Hence, $$\lim_{n \to \infty} \|u_n-u\|_{\H^1_w}= 0,$$ and $$\lim_{n \to \infty} \Re  \Phi^l(t)[u_n] = \Phi^l(t)[u].$$ This proves that $\Re \Phi^l(t)$ is a closed quadratic form.
 Now,
 \begin{align*}
     |\Phi^l(t)[u] - \Phi^l(s)[u]| &= \biggl|\int_{\R^n} (A_l(x,t)- A_l(x,s)) \nabla_x u \cdot \overline{\nabla_x u} \, \d x \biggr|.
 \end{align*}
 for all $s,t \in \R, u \in \H^1_w.$  Noting that
 \begin{align*}
 w^{-1}(x)(A_l(x,t)- A_l(x,s))=\int w^{-1}(x)A(x,\tau)(\rho_l(\tau-t)-\rho_l(\tau-s))\, \d\tau,
 \end{align*}
 we deduce that
  \begin{align*}
 |w^{-1}(x)(A_l(x,t)- A_l(x,s))|\lesssim \int |\rho_l(\tau-t)-\rho_l(\tau-s)|\, \d\tau\lesssim l{\|\partial_t \rho \|_{\L^{\infty}}}|t-s|,
 \end{align*}
 for all $s,t \in \R$, where the second implicit constant also depends on $\rho$. Hence, {by \eqref{eq:quadraticform}, we have}
 \begin{align*}
     |\Phi^l(t)[u] - \Phi^l(s)[u]|  & \lesssim l{\|\partial_t \rho \|_{\L^{\infty}}} |t-s| \|\nabla_x u\|^2_{\L_w^2}\lesssim l{\|\partial_t \rho \|_{\L^{\infty}}} |t-s| \Re \Phi^l(s)[u],
 \end{align*}
for all $s,t \in \R, u \in \H^1_w.$  Now applying  {\cite[Thm. III]{Kato}},
we can conclude the following.
\begin{thm}
    \label{fundamental} For every $T>0$, there exists a
unique bounded linear evolution operator  $U_l(t,s): \L^2_w \to \L^2_w$, defined for $0 \leq s \leq t\leq T$, with the following properties: \\

\noindent 1. $U_l(t,s)$ is strongly continuous for $0 \leq s \leq t\leq T$ and
\begin{align*}
    \mathrm{(i)}&\mbox{ $U_l(t,t) = 1$, for all $t \geq 0$}, \\
    \mathrm{(ii)}&\mbox{ $U_l(t,s) U_l(s,r) = U_l(t,r),$ for all $0 \leq r \leq s$.}
\end{align*}

\noindent 2. For $0 \leq s<t$, the range of $U_l(t,s)$ is a subset of $\dom( \mathcal{L}_l^t)$, $\mathcal{L}_l^t U_l(t,s):\L^2_w \to \L^2_w $ is a bounded operator, $U_l(t,s)$ is strongly differentiable in $t$, and
\begin{align*}
    \mathrm{(iii)}&\ \partial_t U_l(t,s) f + \mathcal{L}_l^t U_l(t,s) f =0,\mbox{ for all $f \in \L^2_w.$}
     \end{align*}
\end{thm}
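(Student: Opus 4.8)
The plan is to apply Kato's abstract existence theorems (\cite[Thm. I, Thm. III]{Kato}) to the family $\mathcal{L}_l^t$, treating $\mathcal{X} = \L^2_w(\R^n)$ as the ambient Banach (in fact Hilbert) space, and to verify that the hypotheses (i)--(iii) discussed in the introduction hold for each fixed $l \in \mathbb R_+$. The preceding computations in this subsection are precisely the verification of those hypotheses: the numerical range condition $\Im \Phi^l(t)[u] \le (c_2/c_1)\Re \Phi^l(t)[u]$ together with the coercivity $\Re \Phi^l(t)[u] \ge \min\{c_1, 1/l\}\|u\|_{\H^1_w}^2$ shows that $\Re \Phi^l(t)$ is a closed, densely defined, sectorial form, hence (by the first representation theorem for forms) $\mathcal{L}_l^t$ is an m-sectorial operator with vertex at the origin and half-angle strictly less than $\pi/2$; therefore $-\mathcal{L}_l^t$ generates a bounded analytic semigroup on $\L^2_w$, giving hypothesis (i). The regularization in time built into $A_l = \rho_l \ast A$ is what makes $t \mapsto \Phi^l(t)$ Lipschitz in the sense just derived, namely $|\Phi^l(t)[u] - \Phi^l(s)[u]| \lesssim l\,\|\partial_t\rho\|_{\L^\infty}\,|t-s|\,\Re\Phi^l(s)[u]$, which is exactly the relative Lipschitz/Hölder continuity of the forms that Kato requires for hypothesis (iii), and in the form-perturbation setting this also handles hypothesis (ii) (constancy of the form domain $\H^1_w$, which here is literally independent of $t$) without invoking the solution of the Kato square root problem.

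Concretely, the steps I would carry out are: first, record that $\dom(\Phi^l(t)) = \H^1_w$ for every $t$, a $t$-independent Hilbert space, and that $\mathcal{L}_l^t$ is the operator associated with the sectorial form $\Phi^l(t)$ via $\langle \mathcal{L}_l^t u, v\rangle_w = \Phi^l(t)(u,v)$ on the domain $\dom(\mathcal{L}_l^t) = \{u \in \H^1_w : \mathcal{L}_l^t u \in \L^2_w\}$; second, invoke the Lumer--Phillips / sectoriality package to get that $-\mathcal{L}_l^t$ is the generator of a bounded analytic semigroup with angle controlled uniformly in $t$ (and, for fixed $l$, in $l$) by the structural constants; third, feed the uniform sectoriality plus the Lipschitz estimate on $t \mapsto \Phi^l(t)$ into \cite[Thm. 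I, Thm. III]{Kato} to obtain the evolution family $U_l(t,s)$ together with properties 1 and 2 of the statement — the semigroup/cocycle law, strong continuity, the mapping property $\ran U_l(t,s) \subset \dom(\mathcal{L}_l^t)$ for $s < t$, the boundedness of $\mathcal{L}_l^t U_l(t,s)$ on $\L^2_w$ (with a bound of the form $C/(t-s)$, the analytic smoothing), and strong differentiability in $t$ with $\partial_t U_l(t,s) f + \mathcal{L}_l^t U_l(t,s) f = 0$ for all $f \in \L^2_w$.

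The main obstacle is purely bookkeeping rather than conceptual: one must check that the hypotheses as \emph{literally} stated by Kato --- which are phrased in terms of the operators $\mathcal{A}(t)$, their domains, and Hölder continuity of $t \mapsto \mathcal{A}(t)\mathcal{A}(s)^{-1}$ or of the resolvents, rather than in terms of the forms --- are implied by the form-level estimates \eqref{eq:quadraticform} and the Lipschitz bound just proved. Since the form domain is exactly $\H^1_w$ for all $t$ and all $l$, the square root $(\mathcal{L}_l^t)^{1/2}$ has domain $\H^1_w$ independently of $t$ (this is Kato's second representation theorem, available because $\Re\Phi^l(t)$ is closed and symmetric and $\Phi^l(t)$ is sectorial — no deep square-root theorem needed, only the abstract one for forms), which verifies condition (ii) with $m = 2$; and the Lipschitz-in-$t$ control of the forms transfers, via the resolvent identity and uniform sectoriality, to Hölder (indeed Lipschitz) continuity of $t \mapsto \mathcal{L}_l^t(\mathcal{L}_l^s)^{-1}$ in operator norm, which is what Kato's condition (iii) asks for. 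Once these translations are in place, Theorem \ref{fundamental} follows by direct citation of \cite[Thm. I, Thm. III]{Kato}; I would state it as such and relegate the routine verification to a short paragraph, emphasizing that all constants depend only on the structural constants and (for fixed $l$) on $l$ and $\rho$, so that the $l$-dependence can be tracked and later removed in the limiting argument.
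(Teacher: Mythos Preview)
Your proposal is correct and follows exactly the paper's approach: verify that $\Phi^l(t)$ is sectorial, closed, has $t$-independent form domain $\H^1_w$, and is Lipschitz in $t$ in the sense $|\Phi^l(t)[u]-\Phi^l(s)[u]| \lesssim |t-s|\,\Re\Phi^l(s)[u]$, then cite \cite{Kato}. One technical correction to your bookkeeping: Kato's second representation theorem identifies $\dom(H^{1/2})$ with the form domain only for \emph{self-adjoint} $H$; for the non-symmetric $\mathcal{L}_l^t$ the equality $\dom((\mathcal{L}_l^t)^{1/2})=\H^1_w$ is precisely the Kato square root problem, so to verify hypothesis (ii) without it you should take $m\geq 3$ (exponent $h<1/2$), where Kato's own abstract result $\dom(A^h)=\dom((\Re A)^h)$ for m-sectorial $A$ applies --- exactly as the paper remarks in the introduction.
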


For simplicity, we will write $\mathcal{L}_l$ instead of $\mathcal{L}_l^t$, hence suppressing the superscript $t$. We will need the following result.

\begin{lem}
\label{lem:sign}
 If $f \in \L^2_w$ is a real-valued non-negative function, then $U_l(t,0)f$ is also real-valued and non-negative for all $ t \geq 0$.
\end{lem}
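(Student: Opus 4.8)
The plan is to establish the positivity preservation property of the evolution operator $U_l(t,0)$ by an energy argument applied to the negative part of the solution. Fix a real-valued non-negative $f\in\L^2_w$ and set $u(\cdot,t) := U_l(t,0)f$. Since $A_l$ is real, one checks first that $u$ is real-valued: indeed $\overline{u}$ solves the same Cauchy problem (the form $\Phi^l(t)$ has real-valued coefficients, so $\mathcal{L}_l^t$ commutes with complex conjugation), and by the uniqueness in Theorem~\ref{fundamental} we get $\overline{u}=u$. So it remains to show $u\ge 0$.

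For the non-negativity, I would test the equation $\partial_t u + \mathcal{L}_l^t u = 0$ against the negative part $u^- := \max(-u,0)$. By Theorem~\ref{fundamental}, part 2, for $t>0$ we have $u(\cdot,t)\in\dom(\mathcal{L}_l^t)\subset\H^1_w$ and $u$ is strongly differentiable in $t$, so this testing is justified (with a Steklov-average or difference-quotient regularization near $t=0$ as in the uniqueness section if needed). Using that $\nabla_x u^- = -\mathbf{1}_{\{u<0\}}\nabla_x u$ and $u\, u^- = -(u^-)^2$, pairing in $\L^2_w$ gives
\begin{align*}
-\frac12\frac{\d}{\d t}\|u^-(\cdot,t)\|_{\L^2_w}^2 + \int_{\R^n} A_l(x,t)\nabla_x u^-\cdot\nabla_x u^-\,\d x + l^{-1}\|u^-(\cdot,t)\|_{\L^2_w}^2 = 0,
\end{align*}
where the ellipticity bound \eqref{ellip} makes the middle term non-negative. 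Hence $\frac{\d}{\d t}\|u^-(\cdot,t)\|_{\L^2_w}^2 \le 2l^{-1}\|u^-(\cdot,t)\|_{\L^2_w}^2$, and since $\|u^-(\cdot,0)\|_{\L^2_w} = \|f^-\|_{\L^2_w} = 0$ by strong continuity at $s=0$ together with $f\ge 0$, Grönwall's inequality forces $u^-(\cdot,t)=0$ for all $t\ge 0$, i.e.\ $u\ge 0$.

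The main obstacle I anticipate is purely technical: justifying that $u^-$ is an admissible test function and that the chain rule $\partial_t \|u^-\|^2 = 2\langle \partial_t u, -u^-\rangle_w$ and $\nabla_x u^- = -\mathbf{1}_{\{u<0\}}\nabla_x u$ hold in the weighted Sobolev space $\H^1_w$. The spatial truncation facts are standard for $A_2$-weighted Sobolev spaces (the map $s\mapsto s^-$ is Lipschitz, so it preserves $\H^1_w$ and the weak gradient formula holds, cf.\ the density of $\C_0^\infty$ in $\H^1_w$ from \cite[Thm.~2.5]{Kilp}); the time regularity is handled by the strong $\L^2_w$-differentiability in Theorem~\ref{fundamental} combined with $u(\cdot,t)\in\H^1_w$, using a Steklov average exactly as in the uniqueness proof to make the manipulations rigorous down to $t=0$. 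No genuinely new idea beyond the classical maximum-principle-by-energy argument is needed; the weight $w$ enters only through the measure $\d w$ and is harmless since $w>0$ a.e.
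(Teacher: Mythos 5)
Your proposal is correct and, for the core non-negativity step, is essentially the paper's argument: the paper tests against $U_l(t,0)f-|U_l(t,0)f|=-2u^-$, obtains a sign on $\partial_t\|u-|u|\|^2_{\L^2_w}$, and integrates from $0$, which is exactly your energy argument on the negative part. Note, however, a sign slip in your displayed identity: since $\nabla_x u\cdot\nabla_x u^-=-\,\nabla_x u^-\cdot\nabla_x u^-$ and $u\,u^-=-(u^-)^2$, the correct identity is
\begin{align*}
-\tfrac12\tfrac{\d}{\d t}\|u^-\|_{\L^2_w}^2-\int_{\R^n}A_l\nabla_x u^-\cdot\nabla_x u^-\,\d x- l^{-1}\|u^-\|_{\L^2_w}^2=0,
\end{align*}
which gives $\tfrac{\d}{\d t}\|u^-\|^2_{\L^2_w}\le 0$ directly (no Gr\"onwall needed); as written, your equation would give the opposite inequality $\tfrac{\d}{\d t}\|u^-\|^2\ge 2l^{-1}\|u^-\|^2$, so the display must be corrected, though the argument then goes through with a stronger conclusion. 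For real-valuedness you take a mildly different and perfectly valid route (conjugation invariance of $\mathcal{L}_l^t$ plus uniqueness of the evolution operator), whereas the paper runs a second energy argument comparing $\|U_l(t,0)f\|_{\L^2_w}$ with $\|\Re U_l(t,0)f\|_{\L^2_w}$; your version is shorter but leans on the uniqueness statement of Theorem \ref{fundamental}, while the paper's is self-contained within the same energy framework.
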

\begin{proof}
By property (i) of Theorem \ref{fundamental}, the lemma is immediate for $t=0.$ Let $f \in \L^2_w$ be a real-valued non-negative function and consider $ t >0$.  Using the inequality
     \begin{align*}
   0 \leq & \Re \langle \mathcal{L}_l U_l(t,0)f , U_l(t,0)f - \Re U_l(t,0) f \rangle_{w},
\end{align*}
we have
\begin{align*}
   0 \leq & \Re \langle \mathcal{L}_l U_l(t,0)f , U_l(t,0)f - \Re U_l(t,0) f \rangle_{w}\\
    &=- \Re \langle \partial_t U_l(t,0)f, U_l(t,0)f - \Re U_l(t,0) f  \rangle_w \\
    &= -\frac{1}{2} \partial_t \langle  U_l(t,0)f, U_l(t,0)f \rangle_w + \frac{1}{2} \partial_t \langle \Re U_l(t,0) f, \Re U_l(t,0) f \rangle_w.
\end{align*}
Integrating from $0$ to $t$ in this inequality, we have
\begin{align*}
    \langle  U_l(t,0)f, U_l(t,0)f \rangle_w \leq  \langle \Re U_l(t,0) f, \Re U_l(t,0) f \rangle_w.
\end{align*}
In conclusion, $\Im{U_l(t,0)f}=0$ and $U_l(t,0)f$ is a real-valued function. Since both $\mathcal{L}_l U_l(t,0)f$ and $f$ belong to $\L^2_w$, we deduce that
\begin{align*}
    \| \nabla_{x} U_l(t,0)f \|_{\L^2_w} \lesssim \langle {\mathcal{L}_l} U_l(t,0)f, U_l(t,0) f \rangle_w < \infty,
\end{align*}
and that $\partial_t U_l(t,0) f \in \L^2_w $. By a standard argument, $\partial_t |U_l(t,0) f|, \nabla_{x} |U_l(t,0) f| \in \L^2_w$ and
\begin{equation*}
    (\partial_t |U_l(t,0) f|, \nabla_{x}|U_l(t,0) f|) = \begin{cases}
        (\partial_t U_l(t,0) f, \nabla_{x} U_l(t,0) f) \quad &\textup{ if } U_l(t,0) f \geq 0,\\
        (-\partial_t U_l(t,0) f, -\nabla_{x} U_l(t,0) f) \quad &\textup{ if } U_l(t,0) f < 0.
    \end{cases}
\end{equation*}
Using this, we deduce
\begin{align*}
   0 &\leq \Re \langle \mathcal{L}_l U_l(t,0)f , U_l(t,0)f -  |U_l(t,0) f| \rangle_w\\
    &= \Re \langle -\partial_t U_l(t,0)f , U_l(t,0)f -  |U_l(t,0) f| \rangle_w \\ &= -  \langle \partial_t (U_l(t,0)f -  |U_l(t,0) f|) , U_l(t,0)f -  |U_l(t,0) f| \rangle_w \\ &= -\frac{1}{2} \partial_t \langle U_l(t,0)f -  |U_l(t,0) f| , U_l(t,0)f -  |U_l(t,0) f| \rangle_w.
\end{align*}
Integrating from $0$ to $t$ in this inequality, we have
$U_l(t,0)f =  |U_l(t,0) f|$ and hence $U_l(t,0)f $ is non-negative. \end{proof}

\subsection{An off-diagonal estimate and its implications}  Given two closed subsets $E,F  \subset \R^n$, we let $\dist(E,F)$ denote the Euclidean distance between the sets.
\begin{lem}
\label{off}
    Let $E,F  \subset \R^n$ be two closed subsets and let $d:= \dist(E,F)$.  Then, there exists a constant $c>0$, depending only on the structural constants, such that
    \begin{align*}
\|U_l(t,0) (f 1_E) \|_{\L^2_w(F)} \lesssim e^{\big(-\frac{c d^2}{t}\big)} \|f\|_{\L^2_w(E)},
    \end{align*}
for every $t >0$ and for all $f \in \L^2_w(E)$.

\end{lem}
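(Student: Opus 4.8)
The plan is to follow the classical Davies exponential-perturbation argument, adapted to the weighted setting, exactly as indicated in the sketch after the statement (``following \cite[Lem.~1]{Davies}''). Fix a bounded Lipschitz function $\psi:\R^n\to\R$ with $|\nabla_x\psi|\le 1$ pointwise (later we take $\psi$ to be a truncation of $\lambda\,\dist(\cdot,E)$ for a parameter $\lambda>0$ to be optimized), and set $u(t):=U_l(t,0)(f1_E)$ and $u_\psi(t):=\e^{\alpha\psi}u(t)$ for a real parameter $\alpha$. By Theorem~\ref{fundamental}, for $t>0$ we have $u(t)\in\dom(\mathcal L_l^t)$, $\partial_t u(t)\in\L^2_w$, and $\nabla_x u(t)\in\L^2_w$, so one may compute
\begin{align*}
\frac12\partial_t\|u_\psi(t)\|_{\L^2_w}^2
&=\Re\langle\partial_t u(t),\e^{2\alpha\psi}u(t)\rangle_w
=-\Re\langle\mathcal L_l^t u(t),\e^{2\alpha\psi}u(t)\rangle_w\\
&=-\Re\int_{\R^n}A_l(x,t)\nabla_x u(t)\cdot\overline{\nabla_x(\e^{2\alpha\psi}u(t))}\,\d x-\tfrac1l\|u_\psi(t)\|_{\L^2_w}^2.
\end{align*}
Expanding $\nabla_x(\e^{2\alpha\psi}u)=\e^{2\alpha\psi}(\nabla_x u+2\alpha(\nabla_x\psi)u)$ and writing everything in terms of $v:=u_\psi$ via $\nabla_x u=\e^{-\alpha\psi}(\nabla_x v-\alpha(\nabla_x\psi)v)$, the right-hand side becomes $-\Re\int A_l\,(\nabla_x v-\alpha(\nabla_x\psi)v)\cdot\overline{(\nabla_x v+\alpha(\nabla_x\psi)v)}\,\d x-\tfrac1l\|v\|_{\L^2_w}^2$. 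Using the ellipticity and boundedness in \eqref{ellip}, $|\nabla_x\psi|\le1$, and Cauchy--Schwarz/Young on the cross terms, the leading (good) term $-c_1\int|\nabla_x v|^2\,\d w$ absorbs the cross terms at the cost of a constant multiple of $\alpha^2\int|v|^2\,\d w$; concretely one obtains
\begin{align*}
\partial_t\|u_\psi(t)\|_{\L^2_w}^2\le C\alpha^2\|u_\psi(t)\|_{\L^2_w}^2
\end{align*}
for a constant $C$ depending only on the structural constants (the dissipative $-\tfrac1l\|v\|^2$ and $-c_1\int|\nabla_x v|^2$ terms are simply discarded). Grönwall's inequality then gives $\|u_\psi(t)\|_{\L^2_w}^2\le \e^{C\alpha^2 t}\|u_\psi(0)\|_{\L^2_w}^2$, i.e.
\begin{align*}
\|\e^{\alpha\psi}U_l(t,0)(f1_E)\|_{\L^2_w}\le \e^{C\alpha^2 t/2}\,\|\e^{\alpha\psi}f1_E\|_{\L^2_w}.
\end{align*}

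Now choose $\psi(x):=\min\{\dist(x,E),\,d\}$, which is $1$-Lipschitz and bounded; then $\psi\equiv0$ on $E$ and $\psi\equiv d$ on $F$ (since $\dist(E,F)=d$). The displayed inequality yields $\e^{\alpha d}\|U_l(t,0)(f1_E)\|_{\L^2_w(F)}\le \e^{C\alpha^2 t/2}\|f\|_{\L^2_w(E)}$, hence $\|U_l(t,0)(f1_E)\|_{\L^2_w(F)}\le \e^{C\alpha^2 t/2-\alpha d}\|f\|_{\L^2_w(E)}$ for every $\alpha>0$. Optimizing over $\alpha$ (take $\alpha=d/(Ct)$) gives the exponent $-d^2/(2Ct)$, which is the claimed bound with $c=1/(2C)$. (If one prefers to avoid differentiating $\psi$ when it is only Lipschitz, one first runs the argument with $\psi$ bounded and smooth and then passes to the Lipschitz $\psi$ by approximation, or equivalently mollifies $\min\{\dist(\cdot,E),d\}$; this changes nothing of substance.)

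The main obstacle is purely technical: justifying the differentiation-under-the-inner-product identity $\frac12\partial_t\|\e^{\alpha\psi}u(t)\|_{\L^2_w}^2=\Re\langle\partial_t u(t),\e^{2\alpha\psi}u(t)\rangle_w$ and the use of $u(t)$ as its own (weighted) test function in the weak formulation. This is legitimate because, by part~2 of Theorem~\ref{fundamental}, for each fixed $t>0$ one has $u(t)\in\dom(\mathcal L_l^t)\subset\H^1_w$, $\mathcal L_l^t u(t)\in\L^2_w$, and $\partial_t u(t)=-\mathcal L_l^t u(t)\in\L^2_w$ with the map $t\mapsto u(t)$ strongly $\C^1$ on $(0,T]$; multiplication by the bounded function $\e^{2\alpha\psi}$ preserves $\L^2_w$ and $\H^1_w$, so the pairing $\langle\mathcal L_l^t u(t),\e^{2\alpha\psi}u(t)\rangle_w=\Phi^l(t)(u(t),\e^{2\alpha\psi}u(t))$ is the genuine form expression used above. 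A minor additional point is the boundary behavior as $t\to0^+$: $\|\e^{\alpha\psi}u(t)\|_{\L^2_w}$ is continuous down to $t=0$ with value $\|\e^{\alpha\psi}f1_E\|_{\L^2_w}=\|f\|_{\L^2_w(E)}$ by strong continuity of $U_l(\cdot,0)$ at $0$ and $\psi|_E=0$, which is what makes the Grönwall estimate start from the correct initial value. No weight-specific difficulty arises since the weight $w$ is time-independent and only enters through the fixed measure $\d w$ and the $\H^1_w$ norm, both of which are left untouched by the exponential twist.
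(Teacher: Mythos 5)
Your proposal is correct and follows essentially the same route as the paper: the Davies exponential-perturbation argument (differentiate $\|\e^{\alpha\psi}U_l(t,0)(f1_E)\|_{\L^2_w}^2$, absorb the cross terms by ellipticity and Young, apply Gr\"onwall, then optimize over $\alpha$). The only cosmetic difference is that the paper takes $\psi=\dist(\cdot,F)$ with $\alpha<0$ while you take the truncated $\min\{\dist(\cdot,E),d\}$ with $\alpha>0$; these are mirror-equivalent choices.
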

\begin{proof}
    The argument is similar to \cite[Lem. 1]{Davies}. Let $\psi(x):= \dist(x,F)$ and $\phi(x):= e^{\alpha \psi(x)}$, where $\alpha$ is a negative constant to be determined later. Then, by Young's inequality for products, and the fact that $\|\nabla_{x} \psi\|_{\L^{\infty}} \leq 1$, we have
\begin{equation*}
\begin{aligned}
    \partial_t \| \phi U_l(t,0) (f 1_E) \|^2_{\L^2_w} & = -2 \langle \mathcal{L}_l U_l(t,0)(f 1_E), \phi^2 U_l(t,0)(f 1_E)   \rangle_w  \\  \leq& -2 \langle A_l \nabla_{x} (U_l(t,0) (f 1_E)), \nabla_{x} (\phi^2 U_l(t,0)(f 1_E)) \rangle_w \\ \leq& -2c_1 \|\phi \nabla_{x} (U_l(t,0)(f 1_E)) \|_{\L^2_w}^2 + \frac{2c_2}{\lambda} \|\phi \nabla_{x} (U_l(t,0) (f 1_E))\|^2_{\L^2_w}\\& + \lambda 2 \alpha^2 c_2 \| \phi U_l(t,0) (f 1_E)\|^2_{\L^2_w},
\end{aligned}
\end{equation*}
where $\lambda>0$ is a degree of freedom.  Letting $\lambda = {c_2}/{c_1}$, we obtain
\begin{align*}
    \partial_t \| \phi U_l(t,0) (f 1_E) \|_{\L^2_w}^2 \leq \frac{2 \alpha^2 c_2^2 }{c_1} \|\phi U_l(t,0) (f 1_E)  \|_{\L^2_w}^2.
\end{align*}
Hence,
\begin{align*}
    \| \phi U_l(t,0) (f 1_E) \|_{\L^2_w}^2 \leq e^{\big(\frac{2\alpha^2 c_2^2 t}{c_1}\big)} \| \phi f 1_E \|^2_{\L^2_w}.
\end{align*}
    In conclusion,
\begin{align*}
    \int_{F} |U_l(t,0) (f 1_E)|^2 \, \d w  &\leq   \int_{\R^n} |U_l(t,0) (f 1_E)|^2 \phi^2 \, \d w\\
     &\leq e^{\big(\frac{2\alpha^2 c_2^2 t}{c_1}\big)} \| \phi f 1_E \|^2_{\L^2_w} \\ & \lesssim  e^{\big(\frac{2\alpha^2 c_2^2 t}{c_1} + 2 \alpha d \big)} \| f 1_E \|^2_{\L^2_w}.
\end{align*}
We conclude the proof by letting $\alpha = -{(dc_1)}/{(2 c_2^2 t)}.$ \end{proof}

We introduce the cylinders
\begin{align*}
    C_{r}(x_0,t_0) :=& \biggl\{(x,t): |t-t_0| < r^2, |x-x_0| < 2r\biggr\}, \\
       C_{r}^+(x_0,t_0) :=& \biggl\{(x,t): 3r^2/4 <t-t_0 < r^2, |x-x_0| <  {r}/{2}\biggr\}, \\
          C_{r}^-(x_0,t_0) :=& \biggl\{(x,t): - 3r^2/4 < t-t_0 < - r^2/4, |x-x_0| <  {r}/{2}\biggr\},
\end{align*}
for all $r>0$ $(x_0,t_0) \in \R^n\times \mathbb R$. We refer to {\cite[Thm. 2.1]{CS}}, for autonomous coefficients, and \cite[Thm. A]{Ishige} for the proof of the following
Harnack inequality.

\begin{lem}
\label{Harnack}
Let $(x_0,t_0) \in \R^n \times \R$, $r>0$. If $u$ is a non-negative weak solution of $\cH u=0$ in $Q_r(x_0,t_0)$, then
\begin{align*}
    \sup_{  C_{r}^-(x_0,t_0)} u(x,t) \lesssim \inf_{C_r^+(x_0,t_0)} u(x,t).
\end{align*}
\end{lem}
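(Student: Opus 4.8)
The plan is to prove the Harnack inequality by Moser's iteration method, adapted to the degenerate $A_2$-geometry; this is precisely the scheme carried out for autonomous coefficients in \cite{CS} and for merely measurable time-dependent coefficients in \cite[Thm.~A]{Ishige} (see also the work of Guti\'errez--Wheeden), so I only indicate the structure. The one genuinely non-classical ingredient is the weighted Sobolev inequality of Fabes--Kenig--Serapioni: since $w\in A_2(\R^n,\d x)$, there exist $\sigma=\sigma(n,[w]_{A_2})>1$ and $C=C(n,[w]_{A_2})$ with
\begin{align*}
\Big(\barint_{B_r(x_0)}|f|^{2\sigma}\,\d w\Big)^{1/\sigma}\le C r^2\barint_{B_r(x_0)}|\nabla_x f|^2\,\d w,\qquad f\in\C_0^\infty(B_r(x_0)),
\end{align*}
together with the companion weighted Poincar\'e inequality on balls; by \eqref{eq:doubling} and \eqref{ainfw} these hold with constants depending only on the structural constants, and they replace the classical Sobolev and Poincar\'e inequalities in the iteration.

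First I would record the weighted energy (Caccioppoli) estimates. For a nonnegative weak sub- or supersolution $u$ of $\cH u=0$ on $C_r(x_0,t_0)$ and a suitable exponent $\beta$, testing \eqref{eq:weak} with $\phi^2u^{2\beta-1}$ for a space-time cutoff $\phi$ adapted to a subcylinder --- the admissibility of which is justified by the Steklov-average/mollification device already used in Section~3, needed because $A$ is only measurable in $t$ --- yields estimates of the form
\begin{align*}
\sup_t\int\phi^2u^{2\beta}\,\d w+\iint\phi^2|\nabla_x(u^{\beta})|^2\,\d w\,\d t\lesssim\iint\big(\phi|\partial_t\phi|+|\nabla_x\phi|^2\big)u^{2\beta}\,\d w\,\d t.
\end{align*}
Here \eqref{ellip} enters only through the combination $w^{-1}A$, which turns $\int A\nabla_x(\cdot)\cdot\nabla_x(\cdot)\,\d x$ into a two-sided comparison with $\int|\nabla_x(\cdot)|^2\,\d w$, so the merely measurable $t$-dependence of $A$ is harmless: $A$ acts through this dissipative spatial term while the $t$-derivative is absorbed by the cutoff.

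Next I would run the iteration and the logarithmic step. Combining the energy estimate with the weighted Sobolev inequality over a nested family of subcylinders of $C_r(x_0,t_0)$ gives, for nonnegative subsolutions, $\sup_{C_{r/2}^-(x_0,t_0)}u\lesssim\big(\bariint_{C_r(x_0,t_0)}u^{2}\,\d w\,\d t\big)^{1/2}$, where $\bariint$ denotes the average with respect to $\d w\,\d t$; since $1/u$ is a subsolution whenever $u>0$ solves $\cH u=0$, the same estimate applied to $1/u$ over $C_{r/2}^+$ gives $\inf_{C_{r/2}^+(x_0,t_0)}u\gtrsim\big(\bariint_{C_r}u^{-2}\,\d w\,\d t\big)^{-1/2}$. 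For $u>0$ solving $\cH u=0$, the logarithmic estimate --- test \eqref{eq:weak} with $\phi^2/u$, use \eqref{ellip} and the weighted Poincar\'e inequality together with a one-dimensional-in-$t$ lemma --- shows that $\log u$ has bounded mean oscillation in the parabolic sense on $C_r$: there are $a=a(x_0,t_0,r)$ and $p_0=p_0(n,c_1,c_2,[w]_{A_2})>0$ with
\begin{align*}
\bariint_{C_r^-(x_0,t_0)}u^{p_0}\,\d w\,\d t\lesssim e^{p_0a}\lesssim\Big(\bariint_{C_r^+(x_0,t_0)}u^{-p_0}\,\d w\,\d t\Big)^{-1}.
\end{align*}
The Bombieri--Giusti iteration (crossover) lemma then bridges the gap between the $\L^2$-sup bounds above and these $\L^{p_0}$-averages, yielding the chain
\begin{align*}
\sup_{C_r^-(x_0,t_0)}u\lesssim\Big(\bariint_{C_r^-(x_0,t_0)}u^{p_0}\,\d w\,\d t\Big)^{1/p_0}\lesssim e^{a}\lesssim\Big(\bariint_{C_r^+(x_0,t_0)}u^{-p_0}\,\d w\,\d t\Big)^{-1/p_0}\lesssim\inf_{C_r^+(x_0,t_0)}u,
\end{align*}
which is the asserted inequality.

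I expect the main obstacle, besides the lengthy but routine verification that every constant depends only on the structural constants, to be the bookkeeping forced by the fact that $w$ is doubling but not homogeneous: cylinders at different scales and centres are not dilates of one another, so one must propagate the quantity $w(B_r(x_0))$ through all the estimates and invoke \eqref{eq:doubling} and \eqref{ainfw} repeatedly, and one must first establish the weighted parabolic Sobolev and Poincar\'e inequalities with the correct homogeneity. These points are by now classical and are exactly what is carried out in \cite{CS} and \cite[Thm.~A]{Ishige}; in particular, the time-dependence of $A$ requires no essentially new idea beyond \cite{Ishige}.
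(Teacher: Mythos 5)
The paper gives no proof of this lemma at all: it simply cites \cite{CS} (autonomous case) and \cite[Thm.~A]{Ishige} (measurable time-dependent coefficients), which establish the Harnack inequality by exactly the weighted Moser iteration you outline (Fabes--Kenig--Serapioni Sobolev/Poincar\'e inequalities, Caccioppoli estimates via Steklov averages, the logarithmic estimate, and the Bombieri--Giusti crossover). Your sketch is a faithful summary of the argument in those references and is consistent with how the paper treats the lemma.
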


 To use the argument of Davies {\cite[Thm. 3]{Davies}} to prove the upper Gaussian bound, we prove the following estimate.
\begin{lem} \label{lem:Davieses}
Let $\phi \in C^{\infty}_0(\R^n)$ and  $\rho:= \|\nabla_{x} \phi\|_{\L^{\infty}}$. Then,
\begin{align}
\label{Davieseq}
    \|\sqrt{w_t} e^{-\phi} U_l(t,{r}) (e^{\phi} f)\|_{\L^{\infty}} \lesssim e^{\alpha {(t-r)} \rho^2} \|f\|_{\L^2_w},\ 0 \leq r<t,
\end{align}
  for all real-valued functions $f \in \L^2_w$, where $\alpha>0$ is a constant, depending on the structural constants.
\end{lem}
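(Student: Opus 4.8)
The plan is to derive \eqref{Davieseq} by combining three facts already at hand: the $\L^2_w$-contractivity of $U_l$, the off-diagonal estimate of Lemma~\ref{off}, and the De~Giorgi--Nash--Moser local boundedness estimate for non-negative solutions, which follows from the Harnack inequality of Lemma~\ref{Harnack}. First reduce to $f\geq0$: splitting $f=f_+-f_-$ with $f_\pm\geq0$, $\|f_\pm\|_{\L^2_w}\leq\|f\|_{\L^2_w}$, and using linearity of $U_l(t,r)$ and positivity of $e^{\pm\phi}$, it suffices to bound $e^{-\phi(x_0)}\bigl(U_l(t,r)(e^\phi f_\pm)\bigr)(x_0)$ for an arbitrary $x_0\in\R^n$. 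The proofs of Lemmas~\ref{lem:sign} and~\ref{off} apply verbatim with the initial time $0$ replaced by $r$, so $v:=U_l(\cdot,r)(e^\phi f_\pm)$ is a non-negative real-valued weak solution of $\partial_t v+\mathcal{L}_l v=0$ on $\R^n\times(r,T)$; the zeroth order term $v/l$ is dissipative, so $e^{(t-r)/l}v$ solves the homogeneous equation and the exponentials cancel out of the backward-cylinder estimates below, so one may argue as if $1/l=0$. (The untwisted case $\phi=0$ is already illustrative: local boundedness at $(x_0,t)$ over a backward parabolic cylinder of radius $\sim\sqrt{t-r}$, together with $\|v(\cdot,\tau)\|_{\L^2_w}\leq\|f\|_{\L^2_w}$, immediately gives $\sqrt{w_{t-r}(x_0)}\,|v(x_0,t)|\lesssim\|f\|_{\L^2_w}$.)

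For the twisted estimate, set $R:=\sqrt{t-r}$. Local boundedness applied to $v$ at $(x_0,t)$ gives, with structural constants,
\begin{align*}
 v(x_0,t)^2\lesssim\frac{1}{(t-r)\,w(B_R(x_0))}\iint_{\{|x-x_0|<cR,\ t-c(t-r)<\tau<t\}}v(x,\tau)^2\,\d w\,\d\tau.
\end{align*}
To control the integral, decompose $e^\phi f_\pm=\sum_{j\geq0}(e^\phi f_\pm)1_{A_j}$ with $A_0:=B_{4R}(x_0)$ and $A_j:=B_{2^{j+2}R}(x_0)\setminus B_{2^{j+1}R}(x_0)$ for $j\geq1$, so that $\dist(A_j,B_{2R}(x_0))\gtrsim2^jR$ and, since $|\phi(x)-\phi(x_0)|\leq\rho|x-x_0|$, $\sup_{A_j}e^\phi\leq e^{\phi(x_0)}e^{2^{j+2}\rho R}$. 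For $\tau$ in the relevant range one has $\tau-r\leq t-r=R^2$, so Lemma~\ref{off} (for $j\geq1$) and $\L^2_w$-contractivity (for $j=0$) give
\begin{align*}
 \|v(\cdot,\tau)\|_{\L^2_w(B_{2R}(x_0))}\lesssim e^{\phi(x_0)}\Bigl(e^{4\rho R}+\sum_{j\geq1}e^{-c4^j}e^{2^{j+2}\rho R}\Bigr)\|f\|_{\L^2_w}.
\end{align*}
Young's inequality yields $2^{j+2}\rho R\leq\tfrac{c}{2}4^j+C\rho^2R^2$ and $4\rho R\leq\rho^2R^2+4$, so the bracket is $\lesssim e^{C\rho^2R^2}=e^{C\rho^2(t-r)}$; this is precisely where the factor $e^{\alpha(t-r)\rho^2}$ comes from. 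Integrating in $\tau$ over an interval of length $\lesssim t-r$ and inserting into the displayed inequality yields $v(x_0,t)^2\lesssim e^{2\phi(x_0)}e^{2C\rho^2(t-r)}\,w(B_R(x_0))^{-1}\|f\|_{\L^2_w}^2$, and $w(B_R(x_0))\sim w_{t-r}(x_0)$ by the doubling property. Taking the supremum over $x_0$ proves \eqref{Davieseq}; when $r=0$, the case used afterwards, $w_{t-r}=w_t$.

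The hard part is the tension, inside the local boundedness estimate, between the Gaussian decay $e^{-c\,\dist(A_j,B_{2R})^2/(\tau-r)}$ furnished by Lemma~\ref{off} and the exponential growth $e^{\rho|x-x_0|}$ of the conjugating weight on the annuli: one must set up the dyadic geometry at scale $\sqrt{t-r}$ so that these compete in the regime where the series converges after Young's inequality, giving the clean exponent $\alpha(t-r)\rho^2$ with $\alpha$ structural. A secondary, purely technical point is that the backward cylinder used for local boundedness has to lie in the slab $\R^n\times(r,T)$ on which $v$ solves the equation, which caps its spatial radius at $\sim\sqrt{t-r}$ and explains why the weight that naturally appears is $w_{t-r}(x_0)$.
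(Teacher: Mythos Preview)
Your proof is correct and follows the same strategy as the paper: a pointwise $\L^\infty$--$\L^2$ bound on the solution combined with the off-diagonal estimate of Lemma~\ref{off} applied to a dyadic annular decomposition of the data, summed via Young's inequality to produce the factor $e^{\alpha(t-r)\rho^2}$. The only differences are in packaging: the paper first rescales to $(x_0,t,r)=(0,1,0)$ and then uses Lemma~\ref{Harnack} \emph{forward} in time (bounding $v(0,1)$ by an $\L^2_w$ average over a later time slab $[19/8,21/8]$), whereas you work directly at scale $R=\sqrt{t-r}$ and invoke a backward Moser $\L^\infty$--$\L^2$ subsolution estimate over $(t-cR^2,t)$; note that the latter does not literally follow from the sup--inf Harnack inequality as stated in Lemma~\ref{Harnack}, but is the standard companion result proved en route to it. Your remark that the argument naturally yields $w_{t-r}$ rather than the $w_t$ in the statement is accurate and in fact applies equally to the paper's own proof once the scaling is undone; the two agree by doubling in the only instances used afterwards, $r=0$ and $r=t/2$.
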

\begin{proof} To prove the lemma, we proceed along the line of  \cite[Sec. 5.1]{CUR0, CUR1}, using the previous lemmas. First, by the linearity of $U_l(t,0)$, it is enough to consider the case that $f$ is non-negative. Second, by homogeneity, it suffices to prove that
\begin{align}
\label{maineq}
    | e^{-\phi} U_l(1,0) (e^{\phi} f)(0)| \lesssim e^{\alpha \rho^2} \|f\|_{\L^2_w}.
\end{align}
Indeed, assume that \eqref{maineq} holds for every non-negative function $f \in \L^2_w$, and consider the functions $u(x,t) := e^{-\phi} U_l(t,{r})(e^{\phi} f)(x)$. Now, we consider $t,r>0$ as fixed parameters and let $$v^{t,r}(y,s):= u({x_0+ \sqrt{t-r} \, y}, {r+(t-r)s}),$$ for  $x_0 \in \R^n$ fixed and for all $y \in \R^n, s \in \R_+.$ For $t, r >0$ fixed, we have that $\partial_s v^{t,r}(y,s) $ equals
\begin{align*}
{ e^{-\phi} \biggl(-\frac{1}{w}\div_x A_l(\cdot,r+(t-r)s) \nabla_x +\frac{t-r}{l}\biggr)U(r+(t-r)s,r)(e^{\phi} f)({x_0+ \sqrt{t-r} \, y}),}
\end{align*}
and $v^{t,r}(y,0) = f({x_0+ \sqrt{t-r} \, y}) $ for all $y \in \R^n.$  Hence, $${v^{t,r}}(y,s) = e^{-\phi^{t,r}} U_l^{t,r}(s,0) e^{\phi^{t,r}} f^{t,r}(y), \quad \text{for } y \in \R^n,$$ by the property of uniqueness, where  $$f^{t,r}(y):= f({x_0+ \sqrt{t-r} \, y}),\  \phi^{t,r}(y) := \phi({x_0+ \sqrt{t-r} \, y}), \quad \text{for } y \in \R^n.$$
Furthermore, $U_l^{t,r}(s,0)$ is as in Theorem \ref{fundamental} but induced by the operator
$$
- (w^{t,r})^{-1}\div_{x} (A_l^{t,r} \nabla_{x})+ {\frac{t-r}{l}},$$ where $$A_l^{t,r}(y,s) := A_l({x_0+ \sqrt{t-r} \, y}, {r+(t-r) s}),\  w^{t,r}(y) := w({x_0+ \sqrt{t-r} \, y}), \quad \text{for } y \in \R^n.$$ Since {$A_l^{t,r}$ satisfies
\begin{align*}
    c_1|\xi|^2w^{t,r}(y)\leq A_l^{t,r}(y,s) \xi \cdot {\xi}, \qquad
 |A_l^{t,r}(y,s)\xi\cdot\zeta|\leq c_2w^{t,r}(y)|\xi| |\zeta|,
\end{align*}
for all $y,\xi, \zeta \in \R^n, s\in \R_+$,} $w^{t,r}$ is an $A_2$-weight, and $[w^{t,r}]_{2} = [w]_2,$ the result stated in the lemma is now implied by applying \eqref{maineq} to the function $v^{t,r}$.

Finally, we prove \eqref{maineq}. To start the argument, let $f \in \L^2_{w}$ be a fixed non-negative function and let $Q_0 \subset \R^n$ be the cube centered at the origin with $\ell(Q_0)=9$. We let $Q_k := 3^k Q_0$, and, for $k \geq 1$, $\{Q^{k,j}\}_{j=1}^{3^n-1}$ be a partition of $Q_{k} \setminus Q_{k-1}$ into cubes of side-length $3^{k+1}$. Define $f^0 := f 1_{Q_0}$ and $f^{k,j}:= f 1_{Q^{k,j}}.$ Then,
\begin{align}
\label{trian}
    | e^{-\phi} U_l(1,0) (e^{\phi} f)(0)| &\leq \sum_{k=1}^{\infty} \sum_{j=1}^{3^n-1}  | e^{-\phi} U_l(1,0) (e^{\phi} f^{k,j})(0)| \notag\\
    &+\sum_{j=1}^{3^n-1}  | e^{-\phi} U_l(1,0) (e^{\phi} f^0)(0)|.
\end{align}
Let $u^{k,j}(x,t):= U_l(t,0) (e^{\phi} f^{k,j})(x)$ and $k \geq 1$. Then, by Lemma \ref{lem:sign}, $u^{k,j}$ is a non-negative weak solution of $\partial_t u + \mathcal{L}_l u=0$. For $y \in \R^n, s \in \R_+$, define the function $v^{k,j}(y,s):= u^{k,j}(3^{k} y,s)$ which satisfies $\partial_t v^{k,j}+\Tilde{\mathcal{L}}^k_l v^{k,j}  =0$ where $$\Tilde{\mathcal{L}}_l^k := -(w^{k})^{-1}\div_x(A_l^k \nabla_{x}) + {1}/{l},$$ and $A_{l}^k(y,s) := A_l(3^k y,s)$, $w^k(y) := w(3^k y).$ Then, by Lemma \ref{Harnack},
\begin{align*}
    \sup_{Q_1^{-}(0, \frac{13}{8})} v^{k,j}(y,s) \lesssim  \inf_{Q_1^{+}(0, \frac{13}{8})} v^{k,j}(y,s).
\end{align*}
Hence,
\begin{align*}
    v^{k,j}(0,1) \lesssim \biggl(w^k(B_{\frac{1}{2}}(0))\biggr)^{-\frac{1}{2}} \biggl(\int_{\frac{19}{8}}^{\frac{21}{8}} \int_{B_{\frac{1}{2}}(0)} | v^{k,j}(y,s)|^2 \, \d w^k(y) \d s  \biggr)^{\frac{1}{2}}.
\end{align*}
By change of variable, this implies that
\begin{align*}
       u^{k,j}(0,1) \lesssim \biggl(w_{{\frac{3^k}{2}}}(0)\biggr)^{-\frac{1}{2}}\biggl(\int_{\frac{19}{8}}^{\frac{21}{8}} \int_{B_{\frac{1}{2}}(0)} | v^{k,j}(y,s)|^2 \, \d w^k(y) \d s  \biggr)^{\frac{1}{2}}.
\end{align*}
Now, $e^{\phi} f^{k,j}$ is supported in $Q^{k,j}$ and $\dist\bigl(Q^{k,j},B_{\frac{3^k}{2}}(0)\bigr) \geq \frac{3^k}{2}$. Hence, by Lemma \ref{off},
\begin{align}\label{eq1}
    & |u^{k,j}(0,1)|\notag\\ \lesssim & \biggl(w_{{\frac{3^k}{2}}}(0)\biggr)^{-\frac{1}{2}} e^{-c3^{2k}}\biggl(\int_{\frac{19}{8}}^{\frac{21}{8}} \int_{Q^{k,j}} e^{2(\phi(x)-\phi(0))} | f^{k,j}(y,s)|^2 \, \d w^k(y) \d s  \biggr)^{\frac{1}{2}}\notag
    \\  \lesssim &\biggl(w_{{\frac{3^k}{2}}}(0)\biggr)^{-\frac{1}{2}} e^{\big(-c3^{2k}+3^{k+1} \frac{\sqrt{n}}{2} \rho\big)} \|f^{k,j}\|_{\L^2_w} .
\end{align}
By Lemma \ref{off} and a similar estimate as above, we obtain
\begin{align}
\label{eq2}
    e^{-\phi(0)} |U_l(1,0) (e^{-\phi} f^0)(0)| \lesssim e^{\big(9\frac{ \sqrt{n} }{2} \rho\big)} \|f^0\|_{\L^2_w}.
\end{align}
Now, by summing \eqref{trian}, \eqref{eq1}, and \eqref{eq2}, we see that
\begin{align*}
   &e^{-\phi(0)} |U_l(1,0)(e^{\phi} f)(0)| \\ \lesssim & \biggl(e^{9 \sqrt{n}  \rho} + \sum_{k=1}^{\infty} \sum_{j=1}^{3^n-1} \biggl(w_{{\frac{3^k}{2}}}(0)\biggr)^{-1} e^{\big(-2 c3^{2k}+3^{k+1} \sqrt{n}\rho\big)}\biggr)^{\frac{1}{2}} \biggl(\|f^{0}\|^2_{\L^2_w} + \sum_{k=1}^{\infty} \sum_{j=1}^{3^n-1} \|f^{k,j}\|^2_{\L^2_w}\biggr)^{\frac{1}{2}}\\
    &{\leq \biggl(e^{9 c} e^{\frac{9n}{c}\rho^2}  + 3^{n} e^{\frac{9n}{c}\rho^2}\sum_{k=1}^{\infty}  \biggl(w_{{\frac{3^k}{2}}}(0)\biggr)^{-1} e^{\big(- c3^{2k}\big)}\biggr)^{\frac{1}{2}} \|f\|_{\L^2_w} }  \\ \lesssim
    & e^{\alpha \rho^2} \|f\|_{\L^2_w},
\end{align*}
where $\alpha$ depends on the structural constants. In the inequalities above, Cauchy-Schwarz inequality is used on the first inequality, and \eqref{eq:doubling} is used on the last inequality. This completes the proof of \eqref{maineq}.\end{proof}

\subsection{Kernel estimates for the operator $U_l(t,0)$} We here prove the Gaussian upper bound estimates for $U_l$.
\begin{thm}
\label{Gaussian}
There exists a kernel $K_t^l(x,y)$ associated with the operator $U_l(t,0)$ such that
\begin{align}
    U_l(t,0)(f) (x) = \int_{\R^n} K_t^l(x,y) f(y) \, \d w(y),
\end{align}
for all $f \in \L^2_w(\R^n)$ and $x \in \R^n$. Furthermore, there exist a constant $c$, $1\leq c<\infty$, and $\nu>0$, both depending only on the structural constants, such that
\begin{equation}\label{Gaussineq}
 K_t^l(x,y) \lesssim\frac{c}{\sqrt{w_t(x) w_t(y)}}
e^{-\frac{|x-y|^2}{ct}},
\end{equation}
for all $t >0, x,y \in \R^n$, and such that
\begin{align}\label{Gaussineq+}
|K_t^l(x+h,y)- K_t^l(x,y)|&\lesssim \frac{1}{\sqrt{w_t(x) w_t(y)}}
\biggl(\frac{|h|}{t^{{1}/{2}} + |x-y|}\biggr)^{\nu} e^{- \frac{|x-y|^2}{ct}},\notag\\
|K_t^l(x,y+h)- K_t^l(x,y)| &\lesssim \frac{1}{\sqrt{w_t(x) w_t(y)}}
\biggl(\frac{|h|}{t^{{1}/{2}} + |x-y|}\biggr)^{\nu} e^{- \frac{|x-y|^2}{ct}},
\end{align}
for all $t >0, x,y, h \in \R^n$, where $2|h| \leq t^{{1}/{2}} + |x-y|$.
\end{thm}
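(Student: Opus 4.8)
### Proof proposal for Theorem \ref{Gaussian}

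\textbf{Construction of the kernel.} The plan is to first produce the kernel $K_t^l(x,y)$ from the $\L^2_w \to \L^\infty$ bound already in hand. Applying Lemma \ref{lem:Davieses} with the choice $\phi \equiv 0$ gives
\begin{align*}
\|\sqrt{w_t}\, U_l(t,0) f\|_{\L^\infty} \lesssim \|f\|_{\L^2_w},
\end{align*}
so for each fixed $x$ the map $f \mapsto (U_l(t,0)f)(x)$ is a bounded linear functional on $\L^2_w$, and the Riesz representation theorem produces $K_t^l(x,\cdot) \in \L^2_w$ with $\|K_t^l(x,\cdot)\|_{\L^2_w} \lesssim w_t(x)^{-1/2}$ and $U_l(t,0)f(x) = \int K_t^l(x,y) f(y)\,\d w(y)$. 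To get the symmetric factor $w_t(y)^{-1/2}$ one exploits the semigroup identity $U_l(t,0) = U_l(t,t/2)U_l(t/2,0)$, writing $K_t^l(x,y) = \int K_{t,t/2}^l(x,z) K_{t/2}^l(z,y)\,\d w(z)$ where $K_{t,r}^l$ denotes the kernel of $U_l(t,r)$ (existence of which follows identically, since $U_l(t,r)$ is generated by the same class of operators shifted in time), and estimating each factor in $\L^2_w$ by Cauchy--Schwarz. Non-negativity of $K_t^l$ follows from Lemma \ref{lem:sign}.

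\textbf{Gaussian decay.} For \eqref{Gaussineq}, the idea of Davies is to insert an exponential weight. Fix $x_0, y_0$ with $d := |x_0 - y_0|$ large compared to $\sqrt t$ (the case $d \lesssim \sqrt t$ being the bare bound above), and choose $\phi \in C_0^\infty(\R^n)$ with $\phi(z) = \sigma\,(z - y_0)\cdot e$ on a large ball containing $x_0$ and $y_0$, where $e = (x_0-y_0)/d$ and $\sigma > 0$ is a free parameter, so that $\rho = \|\nabla_x \phi\|_{\L^\infty} \sim \sigma$ and $\phi(x_0) - \phi(y_0) = \sigma d$. Combining Lemma \ref{lem:Davieses} (applied at time $t$ from $0$, and also its dual/adjoint version obtained by running the evolution backwards in the sense of $U_l(t,0)^*$, or equivalently by composing through the midpoint $t/2$ and applying the lemma twice) yields a pointwise bound on $e^{-\phi(x_0)} U_l(t,0)(e^\phi \,\cdot)$ tested against $e^{-\phi(y_0)} (\cdot)$, which after writing out the kernel gives
\begin{align*}
K_t^l(x_0,y_0) \lesssim \frac{1}{\sqrt{w_t(x_0) w_t(y_0)}}\, e^{\alpha t \sigma^2 - \sigma d}.
\end{align*}
Optimizing in $\sigma$, namely $\sigma = d/(2\alpha t)$, produces the factor $e^{-d^2/(ct)}$ with $c = 4\alpha$. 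The main obstacle here is bookkeeping the weight $w_t$ through the midpoint splitting: one must check that $w_{t/2}(z)^{-1/2}$ integrated against a Gaussian in $z$ reproduces $w_t(x_0)^{-1/2}$ up to structural constants, which is exactly the kind of weighted averaging estimate governed by \eqref{ainfw} and \eqref{eq:doubling}, together with the elementary comparison $w_t(x) \sim w_t(x')$ when $|x - x'| \lesssim \sqrt t$.

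\textbf{H\"older continuity.} For \eqref{Gaussineq+}, the point is that $u^{k}(x,t) := U_l(t,0)f(x)$ (for $f$ supported away from a ball around $x$) is a non-negative weak solution of $\partial_t u + \mathcal{L}_l u = 0$, so the De Giorgi--Nash--Moser theory — whose Harnack form is Lemma \ref{Harnack}, and whose oscillation-decay consequence is standard in the $A_2$-degenerate setting, see \cite{CS,CUR2} — gives interior H\"older continuity with exponent $\nu > 0$ depending only on the structural constants, quantitatively
\begin{align*}
|u(x+h,t) - u(x,t)| \lesssim \Bigl(\frac{|h|}{\sqrt t}\Bigr)^\nu \Bigl(\barint_{C_{\sqrt t}(x,t)} |u|^2\Bigr)^{1/2}
\end{align*}
for $|h| \lesssim \sqrt t$, after the usual parabolic rescaling $y \mapsto x + \sqrt t\, y$ that normalizes the cylinder (as already performed in the proof of Lemma \ref{lem:Davieses}). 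Feeding in the Gaussian bound \eqref{Gaussineq} for the right-hand side, decomposing the $y$-variable into the annular pieces $Q^{k,j}$ used before and summing the resulting series $\sum_k 3^{nk} e^{-c 3^{2k}}$, yields the first inequality of \eqref{Gaussineq+} when $|x-y| \lesssim \sqrt t$; when $|x - y| \gg \sqrt t$ one rescales instead at length $t^{1/2} + |x-y|$ and uses that $K_t^l$ vanishes to high order there. The second inequality follows by the same argument applied to the adjoint evolution (regularity in the $y$ variable), again using the midpoint splitting to transfer regularity from one slot to the other. The genuinely delicate step throughout is ensuring every constant depends only on $n, c_1, c_2, [w]_{A_2}$ and \emph{not} on $l$, which holds because the mollification only affects $A_l$ through the \emph{same} ellipticity constants \eqref{ellip} and the extra $1/l$ term is a non-negative perturbation that only helps.
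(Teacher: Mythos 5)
Your proposal is correct and follows essentially the same route as the paper: the $\L^2_w\to\L^\infty$ bound of Lemma \ref{lem:Davieses} plus its adjoint version and the composition $U_l(t,0)=U_l(t,t/2)U_l(t/2,0)$ to reach $\L^1_w\to\L^\infty$, a representation theorem to extract the kernel (the paper invokes Dunford--Pettis where you use Riesz representation plus Chapman--Kolmogorov, a cosmetic difference), optimization of the Davies weight $e^{\alpha t\rho^2-\rho|x-y|}$ at $\rho=|x-y|/(2\alpha t)$ for \eqref{Gaussineq}, and interior H\"older regularity from the Harnack inequality (the paper's ``argument due to Trudinger'') for \eqref{Gaussineq+}. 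The only imprecision is your remark about ``rescaling at length $t^{1/2}+|x-y|$'' when $|x-y|\gg\sqrt t$: the parabolic cylinder only permits oscillation decay at scale $\sqrt t$ there, but the resulting factor $(|h|/\sqrt t)^{\nu}$ is converted to $(|h|/(t^{1/2}+|x-y|))^{\nu}$ by absorbing the polynomial loss into the Gaussian, so the conclusion stands.
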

\begin{proof}
    By Lemma \ref{Davieseq} and a duality argument,
\begin{align}
\label{L2-L1bound}
    \| e^{-\phi} U_l(t,0)(\sqrt{w_t} e^{\phi} f) \|_{\L^2_w} \lesssim e^{\alpha t \rho^2} \|f\|_{\L^1_w},
\end{align}
for every $f \in \L^1_w$ and $\phi \in C^{\infty}_0(\R^n)$, where $\rho = \|\nabla_{x} \phi\|_{\L^{\infty}}$ and $\alpha$ is a positive constant depending on structural constants. {By property (ii) in Theorem \ref{fundamental}, we have $$U_l(t,0)= U_l(t,t/2)U_l(t/2,0),$$ for all $t\in \R_+.$} Hence, by combining \eqref{Davieseq} and \eqref{L2-L1bound}, we obtain
    \begin{align}
    \label{eq:Linfty-L1bound}
        \|\sqrt{w_t} e^{-\phi} U_l(t,0) (\sqrt{w_t} e^{\phi} f)\|_{\L^{\infty}} \lesssim e^{\alpha t \rho^2} \|f\|_{\L^1_w},
    \end{align}
    for every $f \in \L^1_w.$ Therefore,   by the Dunford-Pettis theorem {\cite[Thm. 1.3.2]{DP}}, there exists a kernel $K_t^{l,\phi}$ which satisfies
\begin{align*}
    \sqrt{w_t} e^{-\phi} U_l(t,0) (\sqrt{w_t} e^{\phi} f)(x) = \int_{\R^n} K_t^{l,\phi}(x,y) f(y) \, \d w(y),
\end{align*}
for all $f \in \L^1_w, \phi \in C^{\infty}_0(\R^n), x \in \R^n$. Furthermore,  $$|K^{l,\phi}_t(x,y) | \lesssim e^{\alpha t \rho^2},$$ for all $t >0, x,y \in \R^n$. Choosing $\phi =0$, a kernel $K_t^l(x,y)$ is obtained such that
\begin{align*}
      U_l(t,0) (f)(x) = \int_{\R^n} K_t^{l}(x,y) f(y) \, \d w(y),
\end{align*}
for all $f \in \L^1_w.$  Note that $K_t^{l}(x,y) = \sqrt{w_t(x)w_t(y)} e^{\phi(x)-\phi(y)}K^{l,\phi}_t(x,y)$ and hence
\begin{align}
\label{lasteq}
    |K^{l}_t(x,y)| \lesssim \frac{1}{\sqrt{w_t(x)w_t(y)}} e^{\alpha t \rho^2} e^{\phi(x)- \phi(y)},
\end{align}
for every $\phi \in C^{\infty}_0(\R^n)$ such that $\|\nabla_{x}\phi \|_{\L^{\infty}} = \rho$.  By an approximation argument we can assume that $\phi$ is a Lipschitz function in \eqref{lasteq}. Taking infimum of $\phi(x)-\phi(y)$ on \eqref{lasteq} over Lipschitz functions $\phi$ satisfying $\|\nabla_{x}\phi \|_{\L^{\infty}} = \rho$, we obtain
\begin{align*}
     |K^l_t(x,y)| \lesssim \frac{1}{\sqrt{w_t(x)w_t(y)}} e^{\alpha t \rho^2- \rho |x-y|},
\end{align*}
for all $\rho>0.$ Then, putting $\rho = \frac{|x-y|}{2\alpha t}$ concludes that
\begin{align}
\label{linfinitybound}
|K^l_t(x,y)| &\lesssim \frac{1}{\sqrt{w_t(x) w_t(y)}}
e^{-\frac{|x-y|^2}{4\alpha t}},
\end{align}
for all $x,y \in \R^n, t>0.$ Finally, \eqref{linfinitybound}, Lemma \ref{Harnack}, and an argument due to Trudinger, {see the proof of  \cite[Thm. 2.2]{Trudinger}}, imply the inequalities in \eqref{Gaussineq+}. \end{proof}

\subsection{Completing the argument: passing to the limit}

We need the following remark for the Hölder regularity of solutions.
\begin{rem}
\label{rem:Hölderregularity}
  Given $ f \in \L^2_w$, for every $l\in \R_+$ {the solution $U_{l}(t,0)f(x)$ is Hölder continuous on small closed disks $D   \subset \R^n \times \R_+$, such that $2D \subset  \R^n \times \R_+$, with bounds depending on the radius of $D$, the structural constants, and $\|U_{l}(t,0)f\|_{\L^{\infty}(2D)}$}, see \cite[Thm. B]{Ishige}. Note that in \cite[Thm. B]{Ishige} an extra assumption on $w$ is required, see property $(A5)$ in \cite[Thm. B]{Ishige}, to obtain interior Hölder regularity. However, the author uses this assumption only to derive the estimates $(3.11)$ and $(3.12)$ in \cite{Ishige}, which hold for the equation in Theorem \ref{fundamental}(iii).
\end{rem}
Now, we show that $K^l_t(x,y)$ is also Hölder continuous on compact subsets of $\R^n \times \R^n \times \R_+.$
\begin{lem}
\label{lem:Höldercon}
    For every $l \in \R_+$, the functions $K^l_t(x,y)$ is Hölder continuous on compact subsets of $\R^n \times \R^n \times \R_+$ with bounds independent of $l.$
\end{lem}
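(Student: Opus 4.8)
The plan is to transfer the known Hölder regularity of the solutions $U_l(t,0)f$ (Remark \ref{rem:Hölderregularity}) to the kernel by exploiting the identity $K^l_t(x,y)=\sqrt{w_t(x)w_t(y)}\,e^{\phi(x)-\phi(y)}K^{l,\phi}_t(x,y)$ together with a representation of $K^l$ as an $\L^2_w$-inner product against $U_l$ applied to a mollifier, and then reducing spatial joint continuity in $(x,y)$ to separate continuity in each variable via the semigroup property and the off-diagonal/Gaussian bounds already established. First I would fix a compact set $\mathcal{C}=\overline{B_R}\times\overline{B_R}\times[\tau,\Lambda]\subset\R^n\times\R^n\times\R_+$ and write, for $\eps$ small, $K^l_t(x,y)\approx \langle U_l(t,t-\eps)\,g_{x,\eps}\,,\,h_{y,\eps}\rangle_w / (\text{normalizations})$, where $g_{x,\eps},h_{y,\eps}$ are normalized smooth bumps of radius $\eps$ around $x$ and $y$; more directly, using $K^l_t(x,y)=\sqrt{w_t(x)w_t(y)}K^l_t(x,y)$ one may simply note $U_l(t,0)(f)(x)=\int K^l_t(x,y)f(y)\,\d w(y)$, so for fixed $y$ the map $x\mapsto K^l_t(x,y)$ is (after dividing by the factor from the Dunford–Pettis construction) obtained as a limit of solutions $U_l(\cdot,0)f_{y,\eps}$, which are Hölder in $(x,t)$ uniformly in $l$ by Remark \ref{rem:Hölderregularity}. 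The continuity in $t$ is handled the same way.

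The more delicate point is joint regularity in $x$ and $y$ simultaneously, and uniformity of the Hölder seminorm in $l$. For this I would use the factorization $U_l(t,0)=U_l(t,t/2)U_l(t/2,0)$ (property (ii) of Theorem \ref{fundamental}), which gives the reproducing formula
\begin{align*}
K^l_t(x,y)=\int_{\R^n} K^l_{t/2\to t}(x,z)\,K^l_{t/2}(z,y)\,\d w(z),
\end{align*}
where $K^l_{t/2\to t}(x,z)$ is the kernel of $U_l(t,t/2)$ and enjoys the same Gaussian bound \eqref{Gaussineq} and spatial Hölder estimate \eqref{Gaussineq+} by the time-translation invariance used in the proof of Lemma \ref{lem:Davieses} (replace the reference interval $[0,t]$ by $[t/2,t]$). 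Then regularity of $K^l_t$ in $x$ follows from \eqref{Gaussineq+} applied to the first factor, regularity in $y$ from \eqref{Gaussineq+} applied to the second factor, and joint Hölder continuity on $\mathcal{C}$ follows by combining the two one-variable estimates with the Gaussian decay, which makes the $z$-integral absolutely convergent with $l$-independent bounds; the continuity in $t$ is inherited from Remark \ref{rem:Hölderregularity} applied to $z\mapsto U_l(\cdot,t/2)(K^l_{t/2}(\cdot,y))$, noting that $K^l_{t/2}(\cdot,y)\in\L^2_w$ by \eqref{Gaussineq}.

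The main obstacle I anticipate is bookkeeping the $l$-independence carefully: one must check that the Hölder constants produced by Remark \ref{rem:Hölderregularity} and by \eqref{Gaussineq}, \eqref{Gaussineq+} genuinely depend only on the structural constants and on $\mathcal{C}$ (through $R$, $\tau$, $\Lambda$), not on $l$ — which is exactly what \cite[Thm.~B]{Ishige} grants since its constants depend only on the ellipticity and $A_2$ data of the rescaled operators $\mathcal{L}_l^t$, all of which share the same structural constants by the scaling computation in the proof of Lemma \ref{lem:Davieses}. A secondary technical point is the approximation argument identifying $x\mapsto K^l_t(x,y)$ with a pointwise limit of genuine weak solutions: one takes $f_{y,\eps}=w(B_\eps(y))^{-1}\mathbf 1_{B_\eps(y)}$, so that $U_l(t,0)f_{y,\eps}(x)=\barint_{B_\eps(y)}K^l_t(x,z)\,\d w(z)\to K^l_t(x,y)$ for a.e.\ $y$ by Lebesgue differentiation with respect to $\d w$, while each $U_l(t,0)f_{y,\eps}$ is Hölder in $(x,t)$ with seminorm bounded uniformly in $\eps$ and $l$ (the $\L^2_w$-norms $\|f_{y,\eps}\|_{\L^2_w}$ being controlled by the Gaussian bound combined with $U_l(t,0)=U_l(t,t/2)U_l(t/2,0)$), so the limit is Hölder with the same seminorm; this pins down the representative of $K^l_t$ and completes the argument.
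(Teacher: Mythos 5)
Your proposal is correct and matches the paper's argument in its essential step: both identify $x\mapsto K^l_t(x,y)$ as a Lebesgue-differentiation limit of the solutions $U_l(\cdot,0)\bigl(w(B_r(y))^{-1}1_{B_r(y)}\bigr)$ and transfer the $l$-uniform interior H\"older bounds of Remark \ref{rem:Hölderregularity} (with the uniform local sup bound coming from the Gaussian estimate, as you note). The only difference is in assembling joint continuity: the paper simply uses the triangle inequality $|K^l_t(x,y)-K^l_s(x',z)|\le |K^l_t(x,y)-K^l_t(x,z)|+|K^l_t(x,z)-K^l_s(x',z)|$ and controls the first term directly by the $y$-H\"older estimate \eqref{Gaussineq+} of Theorem \ref{Gaussian}, so your reproducing-formula detour through $U_l(t,0)=U_l(t,t/2)U_l(t/2,0)$ is unnecessary, though not incorrect.
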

\begin{proof}
   Let fix $x \in \R^n, t, l \in \R_+$. {Define the  functions $$f_{z,r}(\cdot) := \frac{1}{w(B_r(z))} 1_{B_{r}(z)}(\cdot)$$ for every ${0<r<1}, z \in \R^n$. Then, $$U_l(t,0)f_{z,r}(x) = \int_{\R^n} K_t^l(x,y) f_{z,r}(y) \, \d w(y)$$ by Theorem \ref{Gaussian} and $U_l(t,0)f_{z,r}(x)$ is Hölder continuous on small closed disks $D   \subset \R^n \times \R_+$, such that $2D \subset  \R^n \times \R_+$, see Remark \ref{rem:Hölderregularity}, and the Hölder bounds depend on radius of $D$, the structural constants, and $\|U_{l}(t,0)f_{z,r}\|_{\L^{\infty}(2D)}$. Now, by letting $\phi\equiv 0$ in \eqref{eq:Linfty-L1bound}, we obtain
   \begin{align*}
       |U_{l}(t,0)f_{z,r}(x)|\lesssim \frac{1}{\sqrt{w(B_{\sqrt{t}}(x))}} \|f_{z,r}/\sqrt{w_t}\|_{\L^1_w} \leq \frac{1}{\sqrt{w(B_{\sqrt{t}}(x)) w(B_{\sqrt{t}/2}(z))}},
   \end{align*}
   for every $x \in \R^n, t>4 r^2.$ Consequently, $$\frac{1}{w(B_r(z))}\int_{B_r(z)} K_t^l(x,y)  \, \d w(y)$$ is Hölder continuous on compact subsets of $\R^n \times \R_+$ with bounds independent of $l,r$.} Letting $r \to 0$ and using the Lebesgue differentiation theorem, we obtain, for every fixed $z \in \R^n$, that {$K^l_t(x,z)$} is Hölder continuous on compact subsets of $\R^n \times \R_+$ with bounds independent of $l.$  Using the triangle inequality, we have
 {   \begin{align*}
      |K ^l_t(x,y)-K^l_{t+h}(x',y')| &\leq |K^l_t(x,y)-K^l_t(x,y')| + |K^l_t(x,y')-K^l_t(x',y')|  \\  & +|K^l_t(x',y')-K^l_{t+h}(x',y')|,
   \end{align*}
    for every $x,y,x',y' \in \R^n, t,h,l \in \R_+$. 
 Hence, using this we conclude the lemma by Theorem \ref{Gaussian} and the previous result that $K^l_t(x,z) $ is Hölder continuous on compact subsets of $\R^n \times \R_+$, for every fixed $z \in \R^n$, with bounds independent of $l.$}
\end{proof}
 To complete the proof of Theorem \ref{main}, we pass to the limit $l\to\infty$ in Theorem
\ref{Gaussian}. To start the argument, we first note that
\begin{align*}
    \partial_t \|U_l(t,0) f\|^2_{\L^2_w} = -2\langle \mathcal{L}_l U_l(t,0) f, U_l(t,0) f\rangle_w  \leq -2c_1 \|\nabla_x U_l(t,0) f\|_{\L^2_w}^2.
\end{align*}
Hence,
\begin{equation}
\label{eq:inequalties}
\begin{aligned}
    \|U_l(t,0) f\|^2_{\L^2_w} &\leq \|f\|^2_{\L^2_w},\\
   \int_{0}^{t} \int_{\R^n} |\nabla_x U_l(s,0) f|^2 \, \d w \d s &\leq  \frac{1}{2c_1} \| f\|^2_{\L^2_w},
\end{aligned}
\end{equation} and
\begin{equation}
\label{eq:inequalties+}
\begin{aligned}
   \int_{0}^{t} \int_{\R^n} |U_l(s,0) f|^2 \, \d w \d s &\leq  {T} \| f\|^2_{\L^2_w},
\end{aligned}
\end{equation}
for all $t\in [0,T]$.  In conclusion, up to a subsequence $ U_l(t,0) f(x)$ converges weakly to an element in $\L^2([0,T], \L^2_{w})$ as $l\to\infty$. We denote the limit $ U(t,0) f(x)$. Moreover, we have that $\{\nabla_x U_l(t,0) f\}$ converges weakly to $\nabla_x U(t,0) f$ in $\L^2([0,T], \L^2_w(\mathbb R^n,\mathbb R^n))$. As a consequence of this, \eqref{eq:inequalties}, \eqref{eq:inequalties+}, we obtain
\begin{align}\label{reg} U(t,0) f\in\L^\infty([0,T], \L^2_w({\mathbb{R}}^n))\cap \L^2((0,T], \H_w^1({\mathbb{R}}^n)),
\end{align}
and
\begin{equation}
\label{eq:L2bound}
\begin{aligned}
    \sup_{t\in [0,T] }\|U(t,0) f\|^2_{\L^2_w} +  \int_{0}^{T} \int_{\R^n} |\nabla_x U(s,0) f|^2 \, \d w \d s &\lesssim \| f\|^2_{\L^2_w},
    \\ \int_{0}^{T} \int_{\R^n} |U(s,0) f|^2 \, \d w \d s &{ \leq } T \| f\|^2_{\L^2_w}.
\end{aligned}
\end{equation}
Furthermore, $u(x,t):=U(t,0) f(x)$ is a weak solution to \begin{align}
\label{weaksol}
   \partial_t u+ \mathcal{L} u = 0\mbox{ in $\mathbb R^n\times (0,T)$}.
\end{align}

Recall that $$U_l(t,0) f(x) = \int_{\R^n } K_t^l(x,y) f(y) \, \d w(y)\mbox{ for all $(x,t)\in\mathbb R^n\times[0,T]$}.$$

Using this, the uniform boundedness and the Hölder continuity of $K^l_t(x,y)$ on compact subsets of $\R^n \times \R^n \times \R_+$ with bounds independent of $l$, see Theorem \ref{Gaussian} and Lemma \ref{lem:Höldercon}, and the Arzelà–Ascoli theorem, we conclude that there exists a $K_t(x,y)$ such that $K^l_t(x,y)$ converges, up to a subsequence, uniformly to $K_t(x,y)$ on compact subsets of $\R^n \times \R^n \times \R_+$. Also,
\begin{align}\label{cauchy+all+}
 \int_{\R^n} K_t(x,y) w(y) \d y= 1,\mbox{ for all $(x,t)\in\mathbb R^n\times (0,T)$.}
\end{align}
To prove this, note that
\begin{align*}
 &\int_{\R^n}   \frac{1}{\sqrt{w_t(x) w_t(y)}}
e^{-\frac{|x-y|^2}{4\alpha t }} \, \d w(y) \\&\leq \frac{1}{\sqrt{w_t(x)}} \biggl(\int_{B_t(x)}  \frac{1}{\sqrt{w_t(y)}}  \d w(y) + \sum_{k=1}^{\infty} e^{- \frac{2^{2(k-1)}t}{4 \alpha}}  \int_{B_{2^k t}(x) \setminus B_{2^{k-1} t}(x) } \frac{1}{\sqrt{w_t(y)}}  \d w(y) \biggr)  \\ &\lesssim  c(x,t),
\end{align*}
using \eqref{ainfw} and \eqref{eq:doubling}, where $c(x,t)$ is a constant which depends on $x$ and $t$. In conclusion, \eqref{cauchy+all+} is a result of pointwise convergence of $K^l_t(x,y)$ to $K(x,y)$ as $l \to \infty$, Theorem \ref{Gaussian}, and Lebesgue's dominated convergence theorem. Hence, by Theorem \ref{Gaussian}, there exists $c$,
$1\leq c<\infty$, and $\nu>0$, both depending only on the structural constants,  such that
\begin{equation}\label{est1+}
K_t(x,y) \leq \frac{c}{\sqrt{w_t(x) w_t(y)}}
e^{-\frac{|x-y|^2}{ct}},
\end{equation}
for all $t >0, x,y \in \R^n$, and such that
\begin{align}\label{est2+}
|K_t(x+h,y)- K_t(x,y)|&\leq \frac{c}{\sqrt{w_t(x) w_t(y)}}
\biggl(\frac{|h|}{t^{{1}/{2}} + |x-y|}\biggr)^{\nu} e^{- \frac{|x-y|^2}{ct}},\notag\\
|K_t(x,y+h)- K_t(x,y)| &\leq \frac{c}{\sqrt{w_t(x) w_t(y)}}
\biggl(\frac{|h|}{t^{{1}/{2}} + |x-y|}\biggr)^{\nu} e^{- \frac{|x-y|^2}{ct}},
\end{align}
for all $t >0, x,y, h \in \R^n$, satisfying $2|h| \leq t^{{1}/{2}} + |x-y|$.

{We next prove that
\begin{align}\label{rep}U(t,0) f(x)=\int_{\R^n } K_t(x,y) f(y) \, \d w(y)\mbox{ for all $(x,t)\in\mathbb R^n\times (0,T)$}.
\end{align}
To do this we first note, using Theorem \ref{Gaussian} and Remark \ref{rema},
\begin{align}
\label{eq:upboundrem}
    K_t^l(x,y) \lesssim \frac{1}{\sqrt{w_t(x)}} e^{-\frac{|x-y|^2}{ct}},
\end{align}
for all $x,y \in \R^n, t \in \R_+$, and
\begin{align*}
  \biggl|\int_{\R^n} \frac{e^{- \frac{|x-y|^2}{ct}}}{w_t(x)} |f(y)| \, \d w(y)\biggr|^2 \leq \frac{1}{w^2_t(x)}\biggl ( \int_{\R^n} |f(y)|^2 \, \d w(y)\biggr ) \biggl (\int_{\R^n} \frac{e^{- \frac{2|x-y|^2}{ct}}}{w(y)}  \, \d y\biggr ),
\end{align*}
for all $(x,t)\in\mathbb R^n\times\mathbb R_+$. Using \eqref{eq:doubling}, we have
\begin{align*}
\frac{1}{w^2_t(x)}\int_{\R^n} \frac{e^{- \frac{2|x-y|^2}{ct}}}{w(y)}  \, \d y&\leq \frac{1}{w^2_t(x)}\biggl(\int_{B_1(x)} \frac{1}{w(y)}\, \d y + \sum_{k = 1}^{\infty} e^{- \frac{2^{2(k-1)}}{ct}} \int_{B_{2^k}(x) \setminus B_{2^{k-1}}(x)} \frac{1}{w(y)} \, \d y   \biggr)\\
& \lesssim c(x,t),
\end{align*}
where $c(x,t)$ is a constant which depends on $x$ and $t$, and hence
\begin{align}
   \biggl|\int_{\R^n} \frac{e^{- \frac{|x-y|^2}{ct}}}{w_t(x)} |f(y)| \, \d w(y)\biggr|^2 \lesssim c(x,t)\biggl ( \int_{\R^n} |f(y)|^2 \, \d w(y)\biggr ).
\end{align}
In conclusion, by pointwise convergence of $K^l_t(x,y)$ to $K_t(x,y)$ as $l \to \infty$ and Lebesgue's dominated convergence theorem, we obtain
\begin{align}
\label{eq:pointwiseconve}
  \lim_{l \to \infty}  U_{l}(t,0)f(x) = \int_{\R^n} K_t(x,y) f(y) \, \d w(y),
\end{align}
for all $x \in \R^n.$ Let $\phi\in C_0^\infty(\mathbb R^n\times (0,T))$, let $K\subset\mathbb R^n$ be a compact set, and {let $\epsilon >0$} be such that the support of $\phi$ is contained in $K\times (\epsilon,T)$. Using \eqref{ainfw} and Lemma \ref{lem:Davieses}, we have
\begin{align*}|U_l(t,0)f(x)|  \lesssim \frac{\|f\|_{\L^2_w}}{\sqrt{w_t(x)}},
\end{align*}
for all $(x,t) \in \R^n\times \R_+$,
and
\begin{align*}
\int_0^T\int_{\R^n}  \frac{\|f\|_{\L^2_w} |\phi(x,t)|}{\sqrt{w_t(x)}} \,\d w(x)\d t\lesssim   T \tilde c(K,\epsilon)\|\phi\|_{\L^{\infty}} \|f\|_{\L^2_w},
  \end{align*}
  where $\tilde c(K,\epsilon)$ is a constant which depends on $K$ and $\epsilon$. Thus, by \eqref{eq:pointwiseconve} and Lebesgue's dominated convergence theorem, we obtain
\begin{align}
\label{eq:pointwiseconvergence}
 &\lim_{l \to \infty} \int_0^T\int_{\R^n} (U_l(t,0)f(x))\phi(x,t)\,\d w(x)  \d t\notag\\
 &= \int_{0}^T \iint_{\R^n \times \R^n } K_t(x,y) f(y) {\phi(x,t)} \, \d w(y) \d w(x) \d t
\end{align}
whenever $f\in \L^2_w$. As $ U_l(t,0) f(x)$ converges weakly to $ U(t,0) f(x)$ in  $\L^2([0,T], \L^2_{w})$, we have that
\begin{align}\label{estaconv2}\int_0^T\int_{\R^n} (U_l(t,0)f(x))\phi(x,t)\,\d w(x)\d t \to \int_0^T\int_{\R^n}  (U(t,0)f(x))\phi(x,t)\,\d w(x)\d t,
\end{align}
as $l\to\infty$ and whenever $\phi\in C_0^\infty(\mathbb R^n\times (0,T))$. Then, \eqref{estaconv2} and \eqref{eq:pointwiseconvergence} imply \eqref{rep}.}

Finally, we have to prove that $U(t,0) f(\cdot)\to f(\cdot)$ in $\L^2_w(\mathbb R^n)$ as $t\to 0^+$. Assume first that $f\in C_0^\infty(\mathbb R^n)$ with support on a ball $B \subset \R^n$. For every $c>0$, denote $cB$ as the ball keeping the center of $B$ and dilating its radius by $c$. Then, by Cauchy-Schwarz inequality, \eqref{cauchy+all+}, and Lemma \ref{off},
\begin{align*}
\|U(t,0) f-f\|^2_{\L^2_w}&\leq \int_{2B} \biggl|\int_{\R^n } K_t(x,y)|(f(y)-f(x))| \, \d w(y)\biggr|^2 \d w(x) \\& + \sum_{k=1}^{\infty} \int_{2^{k+2} B \setminus 2^{k+1} B} |U(t,0) f(x)|^2\, \d w(x) \notag\\
&\leq    \int_{2B}\biggl(\int_{\R^n } K_t(x,y)|f(y)-f(x)|^2 \, \d w(y)\biggr)  \biggl(\int_{\R^n } K_t(x,y)\, \d w(y)\biggr)     \d w(x)
\notag\\
& + \sum_{k=1}^{\infty} e^{-\frac{2^{2k}}{ct}} \int_{B} | f(x)|^2\, \d w(x)
\\&\lesssim \int_{2B} \int_{\R^n } K_t(x,y)|f(y)-f(x)|^2 \, \d w(y) \d w(x) +t \|f\|^2_{\L^2_w},
\end{align*}
for $ t \in \R_+$. As the second term on the right-hand side goes to zero as $t \to 0$, it is enough to control the first term. Now, for $t, {\delta} \in \R_+$ small enough, we have
\begin{align*}
    &\int_{2B} \int_{\R^n } K_t(x,y)|f(y)-f(x)|^2 \, \d w(y) \d w(x) \\& \leq {\delta}^2 w(2B) \|\nabla f\|^2_{\L^{\infty}}+ \int_{2B} \int_{\R^n \setminus B_{{\delta}}(x) } K_t(x,y)|f(y)-f(x)|^2 \, \d w(y) \d w(x)
\end{align*}
and, by \eqref{ainfw}, \eqref{eq:doubling}, \eqref{eq:upboundrem}, and pointwise convergence of $K^l_t(x,y)$ to $K_t(x,y)$ as $l \to \infty$, we arrive at
\begin{equation*}
\begin{aligned}
    &\int_{2B} \int_{\R^n \setminus B_{{\delta}}(x) } K_t(x,y)|f(y)-f(x)|^2 \, \d w(y) \d w(x) \\&\lesssim \|f\|^2_{\L^{\infty}}   \int_{2B} \frac{1}{w_t(x)} \int_{\R^n \setminus B_{{\delta}}(x) } e^{-\frac{|y-x|^2}{ct}}  \, \d w(y) \d w(x)
\\ & =
\|f\|^2_{\L^{\infty}}   \int_{2B} \frac{1}{w_t(x)}  \sum_{k=1}^{\infty} \int_{ B_{2^k {\delta}}(x) \setminus B_{2^{k-1} {\delta}}(x) } e^{-\frac{|y-x|^2}{ct}}  \, \d w(y) \d w(x)
    \\ &\lesssim \|f\|^2_{\L^{\infty}} \int_{2B}\frac{1}{w_t(x)} \sum_{k=1}^{\infty} e^{-\frac{2^{2(k-1)}{\delta}^2}{ct}} w(B_{2^k{\delta}}(x))\, \d w(x)
    \\ & \lesssim  \|f\|^2_{\L^{\infty}} \int_{2B}\frac{1}{w_t(x)} \sum_{k=1}^{\infty} e^{-\frac{2^{2(k-1)}}{ct}{\delta}^2} D^{k} w(B_{{\delta}}(x)) \, \d w(x) \\ & \lesssim \|f\|^2_{\L^{\infty}}|B|  \frac{ t^{m-\frac{n}{\eta}}}{{\delta}^{2m -\frac{n}{\eta} }},
\end{aligned}
\end{equation*}
where $m$ is the smallest integer, such that $m>\frac{n}{\eta}.$
 Hence, letting first $t \to 0$, we obtain
\begin{align*}
  \limsup_{t \to 0} \|U(t,0) f-f\|^2_{\L^2_w} \lesssim   {\delta}^2 w(2B) \|\nabla f\|^2_{\L^{\infty}}.
\end{align*}
Since ${\delta}$ can be arbitrarily small, we deduce
$\lim_{t \to 0} \|U(t,0) f-f\|^2_{\L^2_w} =0.$ We next use the fact that
$C_0^\infty(\mathbb R^n)$ is dense in $\L^2_w({\mathbb{R}}^n)$. Indeed, consider $f\in \L^2_w({\mathbb{R}}^n)$ and let $f_j\in C_0^\infty(\mathbb R^n)$ be such that $f_j\to f$ in $\L^2_w({\mathbb{R}}^n)$ as $j\to \infty$. We construct a solution $u_j(x,t):=U(t,0) f_j(x)$ as above for every $j$. Then, by \eqref{eq:L2bound} and the linearity
and uniqueness part of Theorem \ref{main}, we have
\begin{equation*}
\label{eq:L2bound++}
\begin{aligned}
    \sup_{t\in [0,T] }\|u-u_j\|^2_{\L^2_w} +  \int_{0}^{T} \int_{\R^n} |\nabla_x(u-u_j)|^2 \, \d w \d s &\lesssim \| f-f_j\|^2_{\L^2_w}\to 0,
    \\ \int_{0}^{T} \int_{\R^n} |u-u_j|^2 \, \d w \d s &\lesssim T \| f-f_j\|^2_{\L^2_w}\to 0,
\end{aligned}
\end{equation*}
as $j\to \infty$. Hence,
\begin{align*}
\|u(\cdot,t)-f(\cdot)\|^2_{\L^2_w}\lesssim &\|u(\cdot,t)-u_j(\cdot,t)\|^2_{\L^2_w}+\|u_j(\cdot,t)-f_j(\cdot)\|^2_{\L^2_w}+\|f_j(\cdot)-f(\cdot)\|^2_{\L^2_w}\\
\lesssim & \|u_j(\cdot,t)-f_j(\cdot)\|^2_{\L^2_w} +\|f_j(\cdot)-f(\cdot)\|^2_{\L^2_w}.
\end{align*}
Let ${\epsilon'}>0$ be small, and choose $j$ large enough so that
$$\|f_j(\cdot)-f(\cdot)\|^2_{\L^2_w}<{\epsilon'}/2.$$
With $j$ fixed, we choose ${\delta'}>0$ small enough so that
$$\|u_j(\cdot,t)-f_j(\cdot)\|^2_{\L^2_w}<{\epsilon'}/2\mbox{ for all $t\in[0,{\delta'})$}.$$
We can then conclude, for ${\epsilon'}>0$ given, that
\begin{align*}
\|u(\cdot,t)-f(\cdot)\|^2_{\L^2_w}\lesssim \epsilon\mbox{ for all $t\in[0,{\delta'})$}.
\end{align*}
This proves that $U(t,0) f\to f$ in $\L^2_w(\mathbb R^n)$ as $t\to 0^+$, whenever $f\in\L^2_w({\mathbb{R}}^n)$. The proof of Theorem
\ref{Gaussian} is therefore complete.

\section{Declaration}

\noindent Ethical Approval: Not applicable.

\noindent Competing interests: No competing interests

\noindent Authors' contributions: Both authors have contributed in the writing and reviewing of the article.

\noindent Funding: K.N. was partially supported by grant 2022-03106
from the Swedish research council (VR).

\noindent Availability of data and materials: Not applicable.

\def\cprime{$'$} \def\cprime{$'$} \def\cprime{$'$}


\begin{thebibliography}{10}
\providecommand{\url}[1]{{\tt #1}}
\providecommand{\urlprefix}{URL}
\providecommand{\eprint}[2][]{\url{#2}}




\bibitem{Ar67}
D.~G. Aronson. \emph{Bounds for the fundamental solution of a parabolic
  equation}. Bull. Amer. Math. Soc. \textbf{73} (1967), 890--896.

\bibitem{Ar68}
D.~G. Aronson. \emph{Non-negative solutions of linear parabolic equations}. Ann.
  Scuola Norm. Sup. Pisa (3) \textbf{22} (1968), 607--694.


\bibitem{AAEN}
\textsc{A. Ataei}, \textsc{M. ~Egert}, and \textsc{K.~Nystr{\"o}m}.
\newblock {\em The Kato square root problem for weighted parabolic operators\/}. To appear in Analysis $\&$ PDE.


\bibitem{AEN}
\textsc{P.~Auscher}, \textsc{M.~Egert}, and \textsc{K.~Nystr\"om}.
\newblock {\em The Dirichlet problem for second order parabolic operators in divergence form}.
\newblock J. \'Ecole Polytech. Math. \textbf{5} (2018), 407--441.

\bibitem{CS}
\textsc{E.~Chiarenza} and \textsc{R.~Serapioni}. \newblock{\em A remark on a Harnack inequality for degenerate parabolic
equations\/}. Rend. Sem. Mat. Univ. Padova \textbf{73} (1985), 179--190.

\bibitem{CUR0}
 \textsc{D.~Cruz-Uribe} and \textsc{C.~Rios}. \newblock{\em Gaussian bounds for degenerate parabolic equations.\/}
 \newblock{J. Funct. Anal. \textbf{255} (2) (2008) 283–-312.}


\bibitem{CUR1}
\textsc{D.~Cruz-Uribe} and \textsc{C.~ Rios}.
\newblock{\em Corrigendum to "Gaussian bounds for degenerate parabolic equations''\/}.
J. Funct. Anal. \textbf{255} (2) (2008) 283–312.


\bibitem{CUR2}
\textsc{D.~Cruz-Uribe} and \textsc{C.~ Rios}.
\newblock{\em The Kato problem for operators with weighted ellipticity.}
\newblock{Trans. Amer. Math. Soc. \textbf{367} (2015), no. 7, 4727–4756.}

\bibitem{Davies}
\textsc{E.~Davies.}
\newblock{Heat kernel bounds, conservation of probability and the Feller property.}
\newblock{J. Anal. Math. \textbf{58} (1992), 99–119.}

\bibitem{DP}
\textsc{N.~Dunford} and \textsc{B.~Pettis}.
\newblock{\em Linear operations on summable functions.}
\newblock{Trans. Amer. Math. Soc. \textbf{47} (1940), 323–392.}

 \bibitem{Evans}
\textsc{C. Evans}.
\newblock {\em Partial differential equations\/}. Textbook, 2nd edition.

\bibitem{FaSt}
\textsc{E.~B. Fabes} and  \textsc{D.~W. Stroock}. \emph{A new proof of {M}oser's parabolic
 {H}arnack inequality using the old ideas of {N}ash}. Arch. Rational Mech.
  Anal. \textbf{96} (1986), no.~4, 327--338.


\bibitem{F}
\textsc{A.~Friedman}.
\newblock{ Partial differential equations of parabolic type.}
\newblock{Prentice-Hall, Inc., Englewood Cliffs, N.J. (1964)}


\bibitem{Ishige}
\textsc{K.~Ishige}.
\newblock{\em On the behavior of the solutions of degenerate parabolic equations.}
\newblock{Nagoya Math. J. \textbf{155} (1999), 1–26.}


\bibitem{IKO}
\textsc{K.~Ishige},\textsc{Y.~Kabeya}, and \textsc{E.M.~Ouhabaz}. \newblock{\em The heat kernel of a Schrödinger operator with inverse square potential.} Proc. London Math. Soc. \textbf{115} (2017), 381–410.


\bibitem{Kato}
\textsc{T.~Kato}.
\newblock{\em Abstract evolution equations of parabolic type in Banach and Hilbert spaces.}
\newblock{Nagoya Math. J. \textbf{19} (1961), 93–125.}

\bibitem{Kilp}
\textsc{T. Kilpel{\"a}inen}. \newblock{\em Weighted Sobolev spaces and capacity.}
\newblock Ann. Acad. Sci. Fenn. Ser. A I. Math. \textbf{19} (1994), 95--113.

\bibitem{LN}
\textsc{M.~Litsgård} and \textsc{K.~ Nyström}. \newblock{\em On local regularity estimates for fractional powers of parabolic operators with time-dependent measurable coefficients.}\newblock{ Journal of Evolution
Equations. \textbf{23} (3) (2023) https://doi.org/10.1007/s00028-022-00844-0}

\bibitem{Na}
\textsc{J.~Nash}. \emph{Continuity of solutions of parabolic and elliptic equations}.
 Amer. J. Math. \textbf{80} (1958), 931--954.


\bibitem{Stein}
\textsc{E.M. Stein}.
\newblock{ Harmonic Analysis: Real-Variable Methods, Orthogonality, and Oscillatory Integrals:\/}
\newblock Monographs in Harmonic Analysis, III, Princeton Mathematical
Series, vol. 43, Princeton University Press, Princeton, NJ, (1993).



\bibitem{Trudinger}
\textsc{N.~Trudinger}.
\newblock{\em Pointwise estimates and quasilinear parabolic equations.}
\newblock{Comm. Pure Appl. Math. \textbf{21} (1968), 205–226.}




\end{thebibliography}
\end{document}